\newtheorem{theorem}{Theorem}[section]
\newtheorem{corollary}[theorem]{Corollary}
\newtheorem{lemma}[theorem]{Lemma}
\newtheorem{proposition}[theorem]{Proposition}
\theoremstyle{definition}
\newtheorem{definition}[theorem]{Definition}
\newtheorem{example}[theorem]{Example}
\theoremstyle{remark}
\newtheorem{remark}[theorem]{Remark}
\numberwithin{equation}{section}
\DeclareMathOperator*{\Sp}{\mathrm{Sp}}
\begin{document}

\title{The structure of algebraic Baer $^*$-algebras}

\author{Zsolt Sz\H{u}cs}
\address{Department of Differential Equations, Budapest University of Technology and Economics, M\H{u}egyetem rakpart 3., 1111 Budapest, Hungary}
\email{szucszs@math.bme.hu}

\author{Bal\'{a}zs Tak\'{a}cs}
\address{Department of Analysis, Budapest University of Technology and Economics, M\H{u}egyetem rakpart 3., 1111 Budapest, Hungary}
\email{takacsb@math.bme.hu}


\subjclass[2020]{Primary 16W10; Secondary 22D15, 46L99}

\date{August 14, 2021.}

\keywords{Algebraic algebra; Baer $^*$-ring; von Neumann regular algebra; $AW^*$-algebra; group algebra}





\begin{abstract}
The purpose of this note is to describe when a general complex algebraic $^*$-algebra is pre-$C^*$-normed, and to investigate their structure when the $^*$-algebras are Baer $^*$-rings in addition to algebraicity. 
As a main result we prove the following theorem for complex algebraic Baer $^*$-algebras: every $^*$-algebra of this kind can be decomposed as a direct sum $M\oplus B$, where $M$ is a finite dimensional Baer $^*$-algebra and $B$ is a commutative algebraic Baer $^*$-algebra. The summand $M$ is $^*$-isomorphic to a finite direct sum of full complex matrix algebras of size at least $2\times2$. The commutative summand $B$ is $^*$-isomorphic to the linear span of the characteristic functions of the clopen sets in a Stonean topological space.  

As an application we show that a group $G$ is finite exactly when the complex group algebra $\mathbb{C}[G]$ is an algebraic Baer $^*$-algebra.
\end{abstract}

\maketitle

\subsubsection*{Published in Pacific Journal of Mathematics, Vol. 316 (2022), No. 2, 431–452.}

DOI: \url{https://doi.org/10.2140/pjm.2022.316.431}

\section{Introduction and preliminaries}\label{intro}

The authors investigated complex algebraic $^*$-algebras in the recently published article \cite{SzT}.  
The present paper is a continuation of this research, and our goal is that to derive new and significant properties of this class of $^*$-algebras (Theorems \ref{expreC}, \ref{main} and \ref{cg}). 

First we recall the notions in question. All rings and algebras are associative and not necessarily unital.
An element $a$ of an algebra $A$ over a field $\mathbb{F}$ is said to be \emph{algebraic}, if $a$ generates a finite dimensional subalgebra, or equivalently, $a$ is a root of a polynomial with coefficients from $\mathbb{F}$. 
The algebra $A$ is \emph{algebraic}, if every element of $A$ is algebraic. 
It is obvious that every subalgebra of an algebraic algebra is algebraic as well.
For general properties of algebraic algebras we refer the reader to \cite{J}.

An \emph{involution} of a ring $A$ is a mapping $^*:A\to A$ such that
\[
(a^*)^*=a;\ (a+b)^*=a^*+b^*;\ (ab)^*=b^*a^*\ \ \ \ \ (a,b\in A).
\] 
A \emph{$^*$-ring} is a ring with involution. 
We say that $A$ is a \emph{$^*$-algebra}, if it is an algebra over the complex numbers $\mathbb{C}$, and there is an involution $^*:A\to A$ with the additional property
\[
(\lambda a)^*=\overline{\lambda}a^*\ \ \ \ \ (\lambda\in\mathbb{C},\ a\in A).
\] 
So our convention in the paper is that a $^*$-algebra is always complex and not necessarily unital.
The set $\{a\in A\ |\ a^*=a\}$ of \emph{self-adjoint elements} is denoted by $A_{sa}$, while $A_+$ is the set of \emph{positive elements} of $A$, i.e., $a\in A_+$ iff $a=\sum_{j\in J}a_j^*a_j $ for some finite system $(a_j)_{j\in J}$ in $A$.

For a $^*$-algebra $A$ and an $a\in A$, the notation $\Sp_A(a)$ stands for the \emph{spectrum} of $a$. That is, 
\[
{\Sp}_A(a):=\{\lambda\in\mathbb{C}|\lambda\mathbf{1}-a\ \textrm{has no inverse in}\ A\}
\]
when $A$ has unit $\mathbf{1}$, and if $A$ is non-unital, then $\Sp_A(a)$ denotes the spectrum of $(0,a)$ in the standard unitization  $A^1:=\mathbb{C}\oplus A$ of $A$.
The $^*$-algebra $A$ is said to be \emph{hermitian}, if every selfadjoint element in $A$ has real spectrum.
$A$ is \emph{symmetric} (resp. \emph{completely symmetric}), if $\Sp_A(a^*a)\subseteq\mathbb{R}_+$ for any $a\in A$ (resp. $\Sp_A(x)\subseteq\mathbb{R}_+$ for any $x\in A_+$), see 9.8. in \cite{Palmer2}. 
Note here that a unital $^*$-algebra $A$ is symmetric iff for all $a\in A$ the element $\mathbf{1}+a^*a$ has inverse in $A$. 
For other general properties of $^*$-algebras and $C^*$-algebras see \cite{Dav}, \cite{Palmer1} and \cite{Palmer2}.

The presence of algebraic $^*$-algebras is natural in the theories of operator algebras and harmonic analysis.
Namely, the $^*$-algebra of finite rank operators on a complex Hilbert space (\cite{SzT}, Example 4.2) and the convolution $^*$-algebra of finitely supported complex functions defined on a discrete locally finite group (\cite{SzT}, Theorem 6.5) are algebraic $^*$-algebras. 
Moreover, algebraic $^*$-algebras appear in the important theory of approximately finite dimensional $C^*$-algebras. 
According to one of the definitions (\cite{Dav}, III.2), a $C^*$-algebra $\mathscr{A}$ is approximately finite dimensional if it has an increasing sequence $(A_n)_{n\in\mathbb{N}}$ of finite dimensional $^*$-subalgebras such that $A:=\cup_{n\in\mathbb{N}}A_n$ is norm-dense in $\mathscr{A}$. 
It is obvious that $A$ is an algebraic $^*$-algebra.

The examples above explain that it might be of interest to study
general properties of these $^*$-algebras.
The authors' recent paper \cite{SzT} contains many results in this context (Theorems 3.8 and 4.1, Proposition 3.10).
For instance, in section 3 we introduced the concept of an \emph{$E^*$-algebra}, that is, a $^*$-algebra $A$ with the following extension property: every representable positive functional defined on an arbitrary $^*$-subalgebra has a representable positive linear extension to $A$. 
Among other results, we showed that a $^*$-algebra is an $E^*$-algebra if and only if it is hermitian and every selfadjoint element is algebraic (\cite{SzT}, Theorem 3.8). In fact, more was proved: the class of $E^*$-algebras coincides with the class of completely symmetric algebraic $^*$-algebras (\cite{SzT}, Corollary 4.5). 

To continue the investigation in this direction, the recent paper focuses on two conditions which an algebraic $^*$-algebra $A$ may fulfil: 
\begin{enumerate}
\item a pre-$C^*$-norm exists on $A$;
\item $A$ is a Baer $^*$-algebra.
\end{enumerate}
In the subsections \ref{algpreintro} and \ref{Baerintro}, we make a short survey on these properties.
Our main result related to $(1)$ is Theorem \ref{expreC}, which is a characterization theorem. 
In relation to $(2)$ we prove Theorem \ref{main}, which is a structure theorem for algebraic Baer $^*$-algebras.
As an application of the latter theorem we show that for a group $G$ the group algebra $\mathbb{C}[G]$ is an algebraic Baer $^*$-algebra iff $G$ is finite (Theorem \ref{cg}).

\subsection{Algebraic pre-$C^*$-algebras}\label{algpreintro}

Let $A$ be a $^*$-algebra.
A submultiplicative seminorm $\sigma:A\to\mathbb{R}_+$ is a \emph{$C^*$-seminorm} (\cite{Palmer2}, 9.5), if 
\begin{equation}\label{ccsillagprop}
\sigma(a^*a)=\sigma(a)^2\ \ \ \ \ (a\in A).
\end{equation}
The kernel of $\sigma$, i.e., the $^*$-ideal $\{a\in A|\sigma(a)=0\}$ is denoted by $\ker\sigma$. 
If $\ker\sigma=\{0\}$, that is, $\sigma$ is a (possibly incomplete) norm, then we say that $\sigma$ is a \emph{pre-$C^*$-norm} and $(A,\sigma)$ is a \emph{pre-$C^*$-algebra}.

One of the most important concepts in the general theory of $^*$-algebras is the following (\cite{Palmer2}, Definition 10.1.1).
For every $a\in A$, let
\begin{equation}\label{greatest}
\gamma_A(a):=\sup\{\Vert\pi(a)\Vert|\pi\ \textrm{is a $^*$-representation of $A$ on a Hilbert space}\}.
\end{equation}
If $\gamma_A$ is finite valued, then it is called the \emph{Gelfand-Naimark seminorm of $A$}, and it is the greatest $C^*$-seminorm on $A$ (\cite{Palmer2}, Theorem 10.1.3).

The concrete examples of algebraic $^*$-algebras above have a common property: they all admit a pre-$C^*$-norm. 
Of course, in general, an algebraic $^*$-algebra does not have a pre-$C^*$-norm. 
For example, Theorem 9.7.22 in \cite{Palmer2} produces many finite dimensional (hence algebraic) $^*$-algebras which not possess a pre-$C^*$-norm.
So a natural question arises: if $A$ is a given algebraic $^*$-algebra, then what kind of assumptions guarantee the existence of a pre-$C^*$-norm on $A$? 
In section \ref{chars}, we give purely algebraic answers to this question (Theorem \ref{expreC}), and in fact, each of these conditions actually characterize the existence of a pre-$C^*$-norm on $A$.
We recall them here.

\begin{definition}\label{properdef}
The involution of a $^*$-ring $A$ is \emph{proper} (\cite{Palmer2}, page 990; \cite{Be1}, §2), if for any $a\in A$ the equation $a^*a=0$ implies that $a=0$.
\end{definition}

By \eqref{ccsillagprop}, it is immediate that every pre-$C^*$-algebra has proper involution.
In Theorem \ref{expreC}, we show for an algebraic $^*$-algebra that the properness of the involution guarantees that the $^*$-algebra admits a pre-$C^*$-norm.

The following concept was first studied by J. von Neumann (see the Introduction in \cite{G}); for general properties see \cite{Be2} and \cite{G}.

\begin{definition}\label{vNregular}
A ring $A$ is said to be \emph{von Neumann regular}, if for every $a\in A$ there is an $x\in A$ such that $axa=a$.
\end{definition}

Von Neumann regularity and hermicity imply for an algebraic $^*$-algebra that the $^*$-algebra has a pre-$C^*$-norm (Theorem \ref{expreC}). 
In subsection \ref{noAbelian}, which deals with Baer $^*$-algebras without Abelian summand, von Neumann regularity is very important.

Before the next definition we recall the notion of \emph{annihilators} (\cite{Be1}, page 12): if $A$ is a ring, $S\subseteq A$ is an arbitrary subset, then the \emph{right annihilator} of $S$ is
\[
\mathrm{ann}_{rA}(S):=\{y\in A|ay=0\ \forall a\in S\},
\] 
which is obviously a right ideal of $A$.
The \emph{left annihilator} $\mathrm{ann}_{lA}(S)$ of $S$ can be defined similarly (which is a left ideal); a non-trivial ideal of $A$ is an \emph{annihilator ideal}, if it is a left annihilator of some set $S\subseteq A$.
(We drop $A$ from the index, when it is clear from the context in which algebra the annihilator is considered.)

\begin{definition}\label{Rickartdef}
A $^*$-ring $A$ is said to be a
\begin{itemize}
\item[(a)] \emph{Rickart $^*$-ring} (\cite{Be1}, §3), if for any $a\in A$ the right annihilator of $\{a\}$  is a principal right ideal generated by a projection $f$, that is, 
\[
\mathrm{ann}_r(\{a\})=fA:=\{fy|y\in A\}
\]
with an $f\in A$ such that $f=f^*=f^2$.
\item[(b)] \emph{weakly Rickart $^*$-ring} (\cite{Be1}, §5), if for every $a\in A$ there is a projection $e\in A$ with the following properties:
\begin{enumerate}
\item[(1)] $ae=a$;
\item[(2)] $ay=0$ implies $ey=0$ for every $y\in A$.
\end{enumerate}
If $A$ is a weakly Rickart $^*$-ring, the conditions $(1)$ and $(2)$ uniquely determines the projection $e$, and it is called \emph{the right projection} of $a$; in notation: $e=\mathrm{RP}(a)$.
The \emph{left projection} $\mathrm{LP}(a)$ can be obtained similarly.

A (weakly) Rickart $^*$-algebra is a $^*$-algebra which is also a (weakly) Rickart $^*$-ring.
\end{itemize}
\end{definition}
By §3 and §5 in \cite{Be1}, we have the following properties.
A $^*$-ring $A$ is a Rickart $^*$-ring if and only if $A$ is a weakly Rickart $^*$-ring with unity; in this case the definitions above give $\mathrm{RP}(a)=\mathbf{1}-f$. 
A very important observation is that the involution of a weakly Rickart $^*$-ring is proper, hence algebraic $^*$-algebras which are also weakly Rickart $^*$-algebras admit pre-$C^*$-norms (Theorem \ref{expreC}). 

For a $^*$-ring $A$ let $\mathbf{P}(A)$ be the set of projections in $A$; let $\leq$ denote the natural ordering, i.e., $p\leq q$ iff $p=pq=qp$ ($p,q\in \mathbf{P}(A)$).
Note here that there is an ordering on the set of self-adjoint elements (\cite{Be1}, Definition 8 on page 70 and §50).
In the results of this paper these orderings coincide on $\mathbf{P}(A)$, hence we use the same notation: for $x,y\in A_{sa}$ $x\leq y$ means that $y-x\in A_+$.

The projections  of a weakly Rickart $^*$-algebra form a lattice with the following operations (\cite{Be1}, Proposition 7 on page 29):
\[
e\vee f=f+\mathrm{RP}(e-ef);\ e\wedge f=e-\mathrm{LP}(e-ef)\ \ \ (e,f\in\mathrm{P}(A)).
\]

\subsection{Algebraic Baer $^*$-algebras}\label{Baerintro} 

As we mentioned in the preceding subsection, we prove that algebraic weakly Rickart $^*$-algebras have pre-$C^*$-norms (Theorem \ref{expreC}). 
In Section \ref{struct}, we investigate algebraic $^*$-algebras with the following stronger assumption, which was introduced by I. Kaplansky to give an algebraic axiomatization of von Neumann algebras. 
(See the introductions of \cite{Be1} and \cite{G}.)  

\begin{definition}\label{Baer}
Let $A$ be a $^*$-ring.
We say that $A$ is a \emph{Baer $^*$-ring} (\cite{Be1}, §4), if for every non-empty subset $S\subseteq A$ the right annihilator $\mathrm{ann}_r(S)$ of $S$ is a principal right ideal generated by a projection. 
A \emph{Baer $^*$-algebra} is a $^*$-algebra which is also a Baer $^*$-ring.
\end{definition}

For Baer $^*$-rings the reader is referred to the fundamental works of S. K. Berberian \cite{Be1} and \cite{Be2}.

Our main result in the context of algebraic Baer $^*$-algebras is Theorem \ref{main}, which gives a complete description for this class.
To obtain this theorem we collect some indispensable properties of Baer $^*$-rings here (see \cite{Be1}).

By the definitions, it is obvious that a Baer $^*$-ring $A$ is a Rickart $^*$-ring, hence the involution of $A$ is proper and $A$ is unital.  
A very important property of the projection lattice of a Baer $^*$-ring is contained in the following statement (\cite{Be1}, Proposition 1 in §4):

\begin{proposition}\label{RickartBaer}
The following conditions on a $^*$-ring $A$ are equivalent.
\begin{itemize}
\item[(i)]$A$ is a Baer $^*$-ring;
\item[(ii)]$A$ is a Rickart $^*$-ring whose projections form a complete lattice.
\end{itemize}
\end{proposition}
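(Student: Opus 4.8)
The plan is to prove the two implications separately; the substantive part is $(ii)\Rightarrow(i)$.

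For $(i)\Rightarrow(ii)$, the Rickart property is immediate, since the defining condition of a Baer $^*$-ring applied to a singleton $S=\{a\}$ is exactly the Rickart condition for $a$. To obtain completeness of $\mathbf{P}(A)$, I would use the correspondence between projections and right-annihilator ideals. Given a family $(e_i)_{i\in I}$ in $\mathbf{P}(A)$ (note that $A$ is unital, being Baer hence Rickart), set $S:=\{\mathbf{1}-e_i : i\in I\}$ and observe that $\mathrm{ann}_r(S)=\bigcap_i\mathrm{ann}_r(\{\mathbf{1}-e_i\})=\bigcap_i e_iA$. The Baer hypothesis yields a projection $g$ with $\bigcap_i e_iA=gA$. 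Routine adjoint computations then show $g\le e_i$ for all $i$, and that any projection $h$ with $h\le e_i$ for all $i$ lies in $\bigcap_i e_iA=gA$, hence $h\le g$; so $g=\bigwedge_i e_i$. Thus all infima exist in $\mathbf{P}(A)$, and since $\mathbf{P}(A)$ has a largest element $\mathbf{1}$, the standard order-theoretic fact that the supremum of a family equals the infimum of its set of upper bounds supplies all suprema as well; hence $\mathbf{P}(A)$ is a complete lattice.

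For $(ii)\Rightarrow(i)$, let $\emptyset\neq S\subseteq A$. For each $a\in S$ the Rickart hypothesis provides a projection $f_a$ with $\mathrm{ann}_r(\{a\})=f_aA$, so $\mathrm{ann}_r(S)=\bigcap_{a\in S}f_aA$. Using completeness, put $h:=\bigwedge_{a\in S}f_a\in\mathbf{P}(A)$; I must show $\bigcap_{a\in S}f_aA=hA$. The inclusion $hA\subseteq\bigcap_a f_aA$ follows from $h\le f_a$. For the reverse inclusion take $y\in\bigcap_a f_aA$, so $f_ay=y$, equivalently $(\mathbf{1}-f_a)y=0$, for every $a$. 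Here I would invoke the weakly Rickart structure (a Rickart $^*$-ring is a weakly Rickart $^*$-ring with unit), in particular the left projection $\mathrm{LP}(y)$: from $(\mathbf{1}-f_a)y=0$ and the defining property $(2)$ of $\mathrm{LP}$ one gets $(\mathbf{1}-f_a)\mathrm{LP}(y)=0$, i.e.\ $\mathrm{LP}(y)\le f_a$ for all $a$, hence $\mathrm{LP}(y)\le h$. Then $hy=h(\mathrm{LP}(y)y)=(h\,\mathrm{LP}(y))y=\mathrm{LP}(y)y=y$, so $y\in hA$. This exhibits $\mathrm{ann}_r(S)$ as the principal right ideal generated by the projection $h$, so $A$ is a Baer $^*$-ring.

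The adjoint manipulations (such as $h\in f_aA\Rightarrow f_ah=h\Rightarrow hf_a=h\Rightarrow h\le f_a$, and their converses) are routine and I would only indicate them. The step I expect to be the crux is the reverse inclusion in $(ii)\Rightarrow(i)$: the relations $f_ay=y$ hold separately for each $a$, and one must promote them to the single relation $hy=y$ for the infimum $h=\bigwedge_a f_a$. This is not a formal consequence of lattice completeness; it genuinely uses the minimality of $\mathrm{LP}(y)$ among projections fixing $y$ on the left, which is precisely what allows the infimum to register all the constraints simultaneously. Once this observation is in place, both directions close quickly.
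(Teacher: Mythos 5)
Your proof is correct; note that the paper does not prove this proposition itself but quotes it from Berberian's \emph{Baer $^*$-rings} (Proposition 1 of \S 4), and your argument is essentially the one given there: infima/suprema of projections are manufactured from annihilators via $\mathrm{ann}_r(\{\mathbf{1}-e_i\})=e_iA$, and the converse rests on the identity $\bigcap_{a}f_aA=\bigl(\bigwedge_a f_a\bigr)A$. You also correctly isolate the one non-formal step, namely that this last identity requires the minimality of $\mathrm{LP}(y)$ among projections fixing $y$ on the left; the remaining adjoint manipulations are routine.
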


From a given Baer $^*$-ring or $^*$-algebra we can easily obtain "new" Baer $^*$-rings (\cite{Be1}, §4).
If $A$ is a Baer $^*$-ring (resp. $^*$-algebra), then $eAe$ is a Baer $^*$-subring (resp. $^*$-subalgebra) for every projection $e\in\mathbf{P}(A)$, as well as the \emph{bicommutant} $S'':=\{a\in A|as=sa\ \forall s\in S\}$ of every $^*$-closed subset $S$ of $A$.  
 It is obvious that the image of a Baer $^*$-ring via a $^*$-isomorphism (i.e., injective $^*$-homomorphism) is also a Baer $^*$-ring.

The more concrete examples of Baer $^*$-rings are the so-called $AW^*$-algebras, that is, $C^*$-algebras which are Baer $^*$-algebras (see \cite{Be1} and \cite{SWb}).
For instance, every von Neumann algebra is an $AW^*$-algebra (\cite{Be1}, Proposition 9 on page 24), but
for us the following two examples are the most important.

\begin{example}\label{finitedim}
The class of finite dimensional Baer $^*$-algebras is precisely the class of finite dimensional $C^*$-algebras.
Indeed, a Baer $^*$-algebra $A$ has proper involution, hence finite dimensionality implies that $A$ is a $C^*$-algebra (\cite{Palmer2}, Theorem 9.7.22). Moreover, $A$ can be written as a finite direct sum of standard complex matrix $^*$-algebras (see also \cite{Dav}, page 74): 
\begin{equation}\label{matrix}
A\cong\oplus_{k=1}^mM_{n_k}(\mathbb{C}).
\end{equation}
Furthermore, it is clear that full matrix $^*$-algebras over $\mathbb{C}$ are Baer $^*$-algebras, as well as their finite direct sums.
\end{example}

\begin{example}\label{kommutativ}
By Theorem 1 in §7 of \cite{Be1} and the well-known Gelfand-Naimark theorem, a commutative $C^*$-algebra $\mathscr{B}$ is an $AW^*$-algebra if and only if it is $^*$-isomorphic to the $C^*$-algebra $\mathscr{C}(T;\mathbb{C})$ of the complex valued continuous functions on a \emph{Stonean topological space} $T$ (that is, $T$ is compact Hausdorff and \emph{extremally disconnected}: every open set has open closure).

For a Stonean topological space $T$, the $C^*$-algebra $\mathscr{C}(T;\mathbb{C})$ naturally contains a norm-dense algebraic Baer $^*$-subalgebra.
Indeed, the projections of $\mathscr{C}(T;\mathbb{C})$ are exactly the characteristic functions of the clopen (closed and open) sets in $T$ (\cite{Be1}, page 40 and 41), and these form a complete lattice.
Hence, if $\widehat{B}$ denotes the linear span of the characteristic functions of the clopen sets, then it is a dense (\cite{Be1}, Proposition 1 in §8) unital $^*$-subalgebra with complete projection lattice. 
From the commutativity, it is easy to see that $\widehat{B}$ is algebraic, moreover the supremum-norm is a pre-$C^*$-norm on $\widehat{B}$.
Our result (Theorem \ref{expreC}) implies that $\widehat{B}$ is a Rickart $^*$-algebra, thus $\widehat{B}$ is a Baer $^*$-algebra by Proposition \ref{RickartBaer}.
\end{example}

Now one may ask the following question: are there algebraic Baer $^*$-algebras that have different structure than the ones in the previous examples?   
The answer is in 
our result, Theorem \ref{main}. 
It states that every algebraic Baer $^*$-algebra is a direct sum of a finite dimensional Baer $^*$-algebra and a commutative algebraic Baer $^*$-algebra as in Example \ref{kommutativ}.

\section{Existence of pre-$C^*$-norms on algebraic $^*$-algebras}\label{chars}

Our recently published paper \cite{SzT} contains an equivalent condition for a pre-$C^*$-algebra to be algebraic (Theorem 4.1).
As it was mentioned in \ref{algpreintro} of the Introduction, we assume algebraicity in this section and we study conditions which imply that an algebraic $^*$-algebra has a pre-$C^*$-norm (Theorem \ref{expreC}). 
This is our main result in this part.

The following statement about algebraic algebras is elementary (see Lemma 3.2 in \cite{SzT} and Lemma 3.12 in \cite{Passman2}).

\begin{lemma}\label{spegyen}
Let $A$ be a complex algebraic algebra. 
For an element $b\in A$, denote by $A_b$ the subalgebra generated by $\mathbf{1}$ and $b$ (resp. $b$) if $A$ is unital (resp. $A$ is non-unital).  

Then for every $b\in A$, the spectrum $\Sp_A(b)$ is non-empty, finite and the following is true:
\begin{equation}\label{egyenlosp}
{\Sp}_{A}(b)=\left\{\begin{array}{ll}
\Sp_{A_{b}}(b), & \text{when $A$ is unital}, \\   
\Sp_{A_{b}}(b)\cup\{0\}, & \text{when $A$ is non-unital}.
\end{array}\right.
\end{equation}
Furthermore, for any $a,b\in A$
\begin{align}\label{szorzatsp}
{\Sp}_A(ab)={\Sp}_A(ba)
\end{align}
holds.
\end{lemma}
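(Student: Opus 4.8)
The plan is to treat the three assertions in turn, reducing everything to the classical theory of commutative (finite-dimensional) algebras. First I would handle the claim that $\Sp_A(b)$ is non-empty and finite. Since $A$ is algebraic, the element $b$ is a root of some nonzero polynomial $p\in\mathbb{C}[t]$; factoring $p$ over $\mathbb{C}$ and using that $\mathbb{C}$ is algebraically closed, one sees that $p$ splits into linear factors, and a standard argument (if $\lambda\mathbf{1}-b$ were invertible for every root $\lambda$ of $p$, then $p(b)$ would be invertible, contradicting $p(b)=0$ unless $p$ is constant) forces $\Sp_A(b)$ to contain at least one root of $p$. This already shows $\Sp_A(b)\neq\emptyset$. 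For finiteness, I would argue that $\Sp_A(b)$ is contained in the set of roots of the minimal polynomial of $b$ (the monic generator of the ideal $\{q\in\mathbb{C}[t]:q(b)=0\}$, which is a proper principal ideal since $A_b$ is finite dimensional): if $\lambda$ is not a root of the minimal polynomial $m$, then $\gcd(m,t-\lambda)=1$, so a Bézout identity $u(t)m(t)+v(t)(t-\lambda)=1$ evaluated at $b$ exhibits $v(b)$ as a two-sided inverse of $\lambda\mathbf{1}-b$ inside $A_b$, hence in $A$.

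Next, for the identity \eqref{egyenlosp}, the key observation is that the inverse of $\lambda\mathbf{1}-b$, whenever it exists in $A$, can be taken to lie in the commutative subalgebra $A_b$ (resp. $A_b^1$). Indeed the Bézout computation above produces an inverse that is a polynomial in $b$; conversely an inverse in a subalgebra is an inverse in $A$, so $\Sp_A(b)=\Sp_{A_b}(b)$ in the unital case. In the non-unital case, $A_b$ is a (possibly non-unital) finite-dimensional commutative algebra and $\Sp_A(b)$ is by definition computed in $A^1$; the extra point $0$ arises exactly because $A_b^1=\mathbb{C}\oplus A_b$ may differ from the unitization used to compute $\Sp_{A_b}(b)$ — more precisely $\Sp_{A^1}(b)=\Sp_{A_b^1}(b)$ by the same subalgebra argument applied inside $A^1$, and $\Sp_{A_b^1}(b)=\Sp_{A_b}(b)\cup\{0\}$ is the routine relation between the spectrum of an element in a non-unital algebra and in its subalgebra generated by that element. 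I would phrase this last step carefully, since it is really just bookkeeping about which unit is adjoined.

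Finally, the equality \eqref{szorzatsp} $\Sp_A(ab)=\Sp_A(ba)$ is the familiar fact that $ab$ and $ba$ have the same nonzero spectral values, upgraded by the preceding results: passing to $A^1$ if necessary, one shows that for $\lambda\neq 0$, if $\lambda\mathbf{1}-ab$ has inverse $c$, then $\lambda^{-1}(\mathbf{1}+bca)$ is an inverse of $\lambda\mathbf{1}-ba$ (a direct verification). This gives $\Sp_A(ab)\setminus\{0\}=\Sp_A(ba)\setminus\{0\}$, and it remains to see that $0\in\Sp_A(ab)\iff 0\in\Sp_A(ba)$; but in an algebraic algebra $0\in\Sp_A(x)$ is equivalent to $x$ not being invertible, and if, say, $ab$ were invertible while $ba$ were not, one derives a contradiction using that $a$ would then have a left inverse and $b$ a right inverse, which in the finite-dimensional subalgebra generated by the relevant elements forces genuine invertibility (or, when $A$ is non-unital, $0$ lies in both spectra automatically). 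I expect the main obstacle to be the careful treatment of the non-unital case — keeping track of which unitization is in play when comparing $\Sp_A$, $\Sp_{A^1}$, $\Sp_{A_b}$, and $\Sp_{A_b^1}$ — rather than any substantive difficulty, since the underlying commutative-algebra facts are elementary.
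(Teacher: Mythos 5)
Your proposal is correct and is essentially the standard elementary argument; the paper itself offers no proof of this lemma, deferring instead to Lemma~3.2 of \cite{SzT} and Lemma~3.12 of \cite{Passman2}, which proceed along the same lines (locating $\Sp_A(b)$ inside the root set of the minimal polynomial via a B\'ezout identity, and using the classical inverse formula $\lambda^{-1}(\mathbf{1}+b(\lambda\mathbf{1}-ab)^{-1}a)$ for the nonzero part of \eqref{szorzatsp}). The only blemishes are cosmetic: in the last step you swapped left and right (invertibility of $ab$ gives $a$ a \emph{right} inverse and $b$ a \emph{left} inverse), and the phrase ``in the finite-dimensional subalgebra generated by the relevant elements'' should be made precise by the observation that in a unital algebraic algebra a one-sided inverse of $x$ forces the minimal polynomial of $x$ to have nonzero constant term, whence $x$ is genuinely invertible; neither point affects the correctness of the argument.
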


If $A$ is a complex algebra, then $A_J$ stands for the Jacobson radical of $A$. 
Following Palmer's terminology in \cite{Palmer1} and \cite{Palmer2}, $A$ is semisimple iff $A_J=\{0\}$.

The next statement collects several results from the authors' paper \cite{SzT}.

\begin{theorem}\label{Studia}
Let $A$ be a $^*$-algebra such that every selfadjoint element of $A$ is algebraic.
Then the following are equivalent.
\begin{itemize}
\item[(i)] $A$ is hermitian.
\item[(ii)] $A_J=\ker\gamma_A$.
\end{itemize}
If these properties hold, then $A$ is a completely symmetric algebraic $^*$-algebra such that the elements of the Jacobson radical $A_J$ are nilpotent.
Furthermore, the Gelfand-Naimark seminorm $\gamma_A$ in \eqref{greatest} is finite valued, hence it is the greatest $C^*$-seminorm on $A$.  
\end{theorem}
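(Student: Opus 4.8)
The theorem assembles several results of \cite{SzT}, so the plan is to quote and combine them; below I indicate the skeleton and where the substance lies.

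First I would isolate the inclusion $A_J\subseteq\ker\gamma_A$, which needs only the standing hypothesis. Since $A_J$ is a self-adjoint quasi-regular ideal, for $a\in A_J$ the self-adjoint element $a^*a$ again lies in $A_J$, so $\mathbf{1}-\lambda a^*a$ is invertible in $A^1$ for every $\lambda$, whence $\Sp_A(a^*a)=\{0\}$ (nonempty by Lemma~\ref{spegyen}); being self-adjoint, $a^*a$ is algebraic, so by Lemma~\ref{spegyen} its minimal polynomial is a power of $t$, i.e.\ $a^*a$ is nilpotent. Then for every $^*$-representation $\pi$ of $A$ on a Hilbert space, $\pi(a)^*\pi(a)=\pi(a^*a)$ is a nilpotent self-adjoint operator, hence $0$, hence $\pi(a)=0$; thus $a\in\ker\gamma_A$. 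The identical argument applied to $a$ itself (valid once $A$ is known to be algebraic) shows that every element of $A_J$ is nilpotent. In particular (i)$\Leftrightarrow$(ii) reduces to the equivalence: $A$ is hermitian $\iff$ $\ker\gamma_A\subseteq A_J$.

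For (i)$\Rightarrow$(ii) together with the supplementary assertions I would invoke the structure theory of \cite{SzT}: a $^*$-algebra whose self-adjoint elements are algebraic is hermitian if and only if it is an $E^*$-algebra, and the $E^*$-algebras are exactly the completely symmetric algebraic $^*$-algebras (\cite{SzT}, Theorem 3.8 and Corollary 4.5). This already gives that $A$ is completely symmetric and algebraic. The extension property of representable positive functionals (the defining property of $E^*$-algebras) then lets one lift the faithful $^*$-representation of the finite-dimensional commutative $^*$-algebra $A_h$ generated by a self-adjoint $h$ up to $A$, which forces $\gamma_A(h)=r_A(h)<\infty$; hence $\gamma_A(a)^2=\gamma_A(a^*a)=r_A(a^*a)<\infty$ for every $a$, so $\gamma_A$ is finite-valued and is the greatest $C^*$-seminorm by \cite{Palmer2}, Theorem~10.1.3. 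Finally, for $\ker\gamma_A\subseteq A_J$: if $\gamma_A(a)=0$ then $\gamma_A(ba)=0$ for every $b$, and a nonzero point of $\Sp_A(ba)$ would yield a nonzero spectral idempotent of $ba$ --- a polynomial in $ba$ with no constant term, hence again in the ideal $\ker\gamma_A$ --- and, idempotents in a symmetric $^*$-algebra being similar to projections, a nonzero projection $p\in\ker\gamma_A$, contradicting $\gamma_A(p)=r_A(p)\ge1$. So $\Sp_A(ba)=\{0\}$ for all $b$, i.e.\ $a\in A_J$, and with the first paragraph this is (ii).

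For (ii)$\Rightarrow$(i): suppose $A_J=\ker\gamma_A$. Then $A/\ker\gamma_A$ is semisimple and the $^*$-representations of $A$ separate its points. For $a\in A$ the element $a^*a$ is self-adjoint, hence algebraic, so its image $\overline{a^*a}$ in $A/\ker\gamma_A$ is algebraic; since $m(a^*a)\in\ker\gamma_A$ for the minimal polynomial $m$ of $\overline{a^*a}$ while no proper divisor of $m$ kills $\overline{a^*a}$, every root of $m$ lies in $\Sp\bigl(\pi(a)^*\pi(a)\bigr)$ for some $^*$-representation $\pi$, hence is a nonnegative real, i.e.\ $\Sp_{A/\ker\gamma_A}(\overline{a^*a})\subseteq\mathbb{R}_+$. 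Thus $A/\ker\gamma_A$ is symmetric, so $\overline{\mathbf{1}+a^*a}$ is invertible there; since $\ker\gamma_A=A_J$ is the Jacobson radical of $A^1$, invertibility modulo it is genuine invertibility, so $\mathbf{1}+a^*a$ is invertible in $A^1$. Hence $A$ is symmetric, therefore hermitian. The step I expect to be the genuine obstacle --- and the reason this theorem is ``collected'' from \cite{SzT} rather than proved afresh --- is the representability machinery behind (i)$\Rightarrow$(ii): realising the spectra of self-adjoint elements by $^*$-representations, equivalently the $E^*$-algebra characterisation of hermitian algebraic $^*$-algebras and the consequent identity $\gamma_A(h)=r_A(h)$ on $A_{sa}$; everything else above is spectral bookkeeping built on Lemma~\ref{spegyen}.
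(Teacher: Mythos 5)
Your proposal is correct, but it is worth noting how it relates to the paper's own ``proof'', which is nothing more than a pointer to \cite{SzT} (Theorem 3.8 (ii$a$), (ii$c$), (iii) and Proposition 3.10 $(b)$, $(f)$, $(g)$): every part of the statement is simply quoted from there. You instead defer only the genuinely hard ingredient --- the $E^*$-algebra characterisation of hermitian $^*$-algebras with algebraic self-adjoint part and the attendant representability of spectra (\cite{SzT}, Theorem 3.8 and Corollary 4.5) --- and rederive the rest by direct spectral bookkeeping: the unconditional inclusion $A_J\subseteq\ker\gamma_A$ and the nilpotency of radical elements via $\Sp_A(a^*a)=\{0\}$ plus properness of the operator involution; finiteness of $\gamma_A$ from $\Vert\pi(a)\Vert^2=\Vert\pi(a^*a)\Vert\le r_A(a^*a)<\infty$; the reverse inclusion $\ker\gamma_A\subseteq A_J$ via spectral idempotents of $ba$ and the passage from idempotents to projections in a symmetric algebra; and (ii)$\Rightarrow$(i) by realising the roots of the minimal polynomial of $\overline{a^*a}$ in some $^*$-representation, so that $A/\ker\gamma_A$, hence $A$, is symmetric and therefore hermitian. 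All of these steps check out, and the one place where the argument silently leans on the deferred machinery --- the inequality $\gamma_A(p)\ge1$ for a nonzero projection $p$, which requires a $^*$-representation not annihilating $p$ and hence the extension property of $E^*$-algebras --- is exactly the obstacle you flag at the end. So your route is more self-contained than the paper's and makes visible which parts of the theorem are elementary spectral theory and which genuinely need the representability results of \cite{SzT}.
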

\begin{proof}
See \cite{SzT}: the equivalence comes from Theorem 3.8 (ii$a$) and (iic); the properties follow from Proposition 3.10. $(b)$, $(g)$, $(f)$ and Theorem 3.8 (iii).
\end{proof}

Our main result on general algebraic $^*$-algebras is the following statement.
We use it often in the Baer $^*$-algebra case, Section \ref{struct}.
Note here that the properties listed below (with the exception of (i)) are purely algebraic conditions.

\begin{theorem}\label{expreC}
Let $A$ be an algebraic $^*$-algebra.
The following assertions are equivalent.
\begin{itemize}
\item[(i)] There exists a pre-$C^*$-norm on $A$.
\item[(ii)] The involution of $A$ is proper.
\item[(iii)] $A$ is hermitian and semisimple.
\item[(iv)]  Every non-zero selfadjoint $b\in A_{sa}$ can be written in a finite sum
\[
b=\sum_{j\in J}\lambda_je_j,
\] 
where $(\lambda_j)_{j\in J}$ is a finite system of non-zero distinct real numbers and $(e_j)_{j\in J}$ is a finite system of non-zero orthogonal projections in $A$.
\item[(v)] $A$ is a weakly Rickart $^*$-ring.

\item[(vi)] $A$ is hermitian and von Neumann regular. 
\end{itemize}
\end{theorem}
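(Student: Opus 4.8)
## Proof Strategy for Theorem \ref{expreC}

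The plan is to prove the cycle of implications (i) $\Rightarrow$ (ii) $\Rightarrow$ (iii) $\Rightarrow$ (iv) $\Rightarrow$ (v) $\Rightarrow$ (ii), and separately handle (iii) $\Leftrightarrow$ (vi) and (i) $\Leftrightarrow$ (iii) as needed. Several arrows are essentially free: (i) $\Rightarrow$ (ii) is the remark after Definition \ref{properdef} (a pre-$C^*$-norm forces proper involution by \eqref{ccsillagprop}), and (v) $\Rightarrow$ (ii) is quoted in the text (the involution of a weakly Rickart $^*$-ring is proper). So the real content lies in (ii) $\Rightarrow$ (iii), (iii) $\Rightarrow$ (iv), (iv) $\Rightarrow$ (v), and the regularity equivalence.

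For (ii) $\Rightarrow$ (iii): I would first show that proper involution plus algebraicity forces hermicity. The idea is that for a selfadjoint $b$, the finite-dimensional $^*$-subalgebra $A_b$ generated by $b$ (and $\mathbf 1$) is a finite-dimensional $^*$-algebra with proper involution, hence a $C^*$-algebra by Theorem 9.7.22 of \cite{Palmer2} (as invoked in Example \ref{finitedim}); in a $C^*$-algebra a selfadjoint element has real spectrum, and by \eqref{egyenlosp} of Lemma \ref{spegyen} this spectrum coincides with $\Sp_A(b)$ (up to adjoining $0$ in the non-unital case, which is still real). Thus $A$ is hermitian. Semisimplicity then follows from Theorem \ref{Studia}: hermicity plus every selfadjoint element algebraic gives $A_J = \ker\gamma_A$, and since the involution is proper, $\gamma_A$ is faithful — indeed if $\gamma_A(a)=0$ then $\gamma_A(a^*a)=0$, and $a^*a$ lies in the $C^*$-algebra $A_{a^*a}$ where $\gamma_A$ restricts to a $C^*$-norm (one must check the restriction of $\gamma_A$ to the finite-dimensional $C^*$-algebra $A_{a^*a}$ is nondegenerate), forcing $a^*a = 0$ and hence $a = 0$; so $A_J = \{0\}$.

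For (iii) $\Rightarrow$ (iv): given hermitian and semisimple, by Theorem \ref{Studia} again $\gamma_A$ is a faithful (since $A_J=\{0\}$) $C^*$-seminorm, so $A$ is a pre-$C^*$-algebra; for a selfadjoint $b$, the finite-dimensional commutative $C^*$-algebra $A_b \cong \mathbb{C}^k$ gives the spectral decomposition $b = \sum \lambda_j e_j$ with the $e_j$ the spectral projections, which are genuine projections of $A$. (This simultaneously gives (iii) $\Rightarrow$ (i).) For (iv) $\Rightarrow$ (v): using the spectral-type decomposition of selfadjoint elements, I would build right projections. Given $a \in A$, consider $a^*a$, write $a^*a = \sum_{j} \lambda_j e_j$ with $\lambda_j \neq 0$, set $e := \sum_j e_j$; then $a^*a\, e = a^*a$, and properness-type arguments (or directly: $a(\mathbf 1 - e) = 0$ because $(\mathbf 1 - e)a^*a(\mathbf 1-e)=0$ and the involution is proper — note (iv) $\Rightarrow$ (ii) since selfadjoints decompose and a sum of orthogonal projections times distinct nonzero reals vanishes only if empty) shows $ae = a$ after passing through $a^*a$; and if $ay = 0$ then $a^*a y = 0$, so $e y = 0$ by the structure of $e$ as spectral projection of $a^*a$. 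Hence $e = \mathrm{RP}(a)$ and $A$ is weakly Rickart.

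Finally, the equivalence with (vi): von Neumann regularity obviously implies proper involution is compatible with Rickart-type structure, and conversely in an algebraic hermitian (hence, by the above, pre-$C^*$) $^*$-algebra, the finite-dimensional $C^*$-algebra $A_{a^*a}$ lets one invert $a^*a$ on its range, producing $x$ with $axa = a$; for the reverse direction one checks von Neumann regularity forces $\Sp_A$ of every element to avoid accumulation, consistent with (iv). I expect the main obstacle to be the careful bookkeeping in (iv) $\Rightarrow$ (v): verifying that the candidate projection $e$ constructed from the spectral decomposition of $a^*a$ genuinely satisfies both $ae = a$ and the annihilator condition $ay = 0 \Rightarrow ey = 0$ requires moving between $a$ and $a^*a$ and using properness of the involution at exactly the right moment, and the non-unital case needs separate attention via the unitization $A^1$ together with the $\{0\}$-adjustment in \eqref{egyenlosp}.
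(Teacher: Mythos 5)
Your overall architecture matches the paper's (finite-dimensional $^*$-subalgebras plus Theorem 9.7.22 of Palmer for hermicity, Theorem \ref{Studia} for the Gelfand--Naimark seminorm, spectral decompositions in $A_b$, and the candidate projection $e=\sum_j e_j$ built from $a^*a$), but there is one genuine logical gap. In your chain (i)$\Rightarrow$(ii)$\Rightarrow$(iii)$\Rightarrow$(iv)$\Rightarrow$(v)$\Rightarrow$(ii), the link (iv)$\Rightarrow$(v) invokes properness of the involution, and at that point in the chain the only implications out of (iv) that you have are the ones you are currently proving; so you must establish (iv)$\Rightarrow$(ii) independently. Your parenthetical justification --- ``a sum of orthogonal projections times distinct nonzero reals vanishes only if empty'' --- only shows that a \emph{selfadjoint} $b\neq 0$ has a nonempty decomposition; properness is the statement $a^*a=0\Rightarrow a=0$ for \emph{arbitrary} $a$, and nothing in (iv) applied to the selfadjoint element $a^*a$ (which is simply $0$) says anything about $a$ itself. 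The paper avoids this by first proving (iv)$\Rightarrow$(iii) directly: $b=\sum_j\lambda_je_j$ is a root of $z\prod_j(z-\lambda_j)$, so the spectral mapping property gives ${\Sp}_A(b)\subseteq\{\lambda_j\}\cup\{0\}\subseteq\mathbb{R}$ (hermicity), and a nonzero selfadjoint $b\in A_J$ would put the idempotent $e_{j_0}=(\lambda_{j_0}^{-1}e_{j_0})b$ into the radical (semisimplicity). Once (i)--(iv) are all known to be equivalent, the paper proves ``(ii) \& (iv) $\Rightarrow$ (v)'' with both hypotheses legitimately in hand. You should either add the (iv)$\Rightarrow$(iii) argument or restructure the cycle; as written it is circular.

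Two smaller points. First, in (ii)$\Rightarrow$(iii) your route to semisimplicity via faithfulness of $\gamma_A$ leaves exactly the hard part unproved (the flagged ``nondegeneracy of $\gamma_A$ on $A_{a^*a}$''). It is fillable --- $\ker\gamma_A=A_J$ consists of nilpotents by Theorem \ref{Studia}, and a finite-dimensional commutative $C^*$-algebra has no nonzero nilpotents --- but the paper's argument is more direct and worth adopting: a selfadjoint $a\in A_J$ is nilpotent, and if $a^n=0$ with $n\geq 2$ minimal then $(a^{n-1})^*(a^{n-1})=a^{2n-2}=0$, so properness forces $a^{n-1}=0$, a contradiction; hence $(A_J)_{sa}=\{0\}$ and $A_J=(A_J)_{sa}+\mathbf{i}(A_J)_{sa}=\{0\}$. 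Second, your treatment of (vi) is too vague on the direction (vi)$\Rightarrow$(iii): hermicity is part of the hypothesis, but semisimplicity requires the standard observation that if $a\in A_J$ and $axa=a$ then $ax$ is an idempotent in the radical, hence $ax=0$ and $a=axa=0$; ``$\Sp_A$ avoids accumulation'' does not substitute for this. The forward direction (iii)$\Rightarrow$(vi) via $x=\bigl(\sum_j\lambda_j^{-1}e_j\bigr)a^*$ is fine and is what the paper does.
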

\begin{proof}
We prove the equivalence of (i), (ii) and (iii) first.

(i)$\Rightarrow$(ii): 
By the $C^*$-property \eqref{ccsillagprop}, the involution is proper on any $^*$-algebra which admits a pre-$C^*$-norm. 

(ii)$\Rightarrow$(iii):
We show that $A$ is hermitian if the involution of $A$ is proper. 
Let us fix a selfadjoint $b\in A_{sa}$; we have to prove that $\Sp_{A}(b)\subseteq\mathbb{R}$.

The equation in \eqref{egyenlosp} implies that it is enough to see the inclusion $\Sp_{A_{b}}(b)\subseteq\mathbb{R}$.
Since $b$ is algebraic, the subalgebra $A_b$ generated by $b$ (which is trivially a $^*$-subalgebra)  is finite dimensional, and has proper involution from the assumption. So Theorem 9.7.22 in \cite{Palmer2} forces $A_b$ to be hermitian, hence $\Sp_{A_{b}}(b)\subseteq\mathbb{R}$ holds.

Now we conclude the semisimplicity. 
The Jacobson radical $A_J$ is a $^*$-subalgebra of $A$, hence $A_J=(A_J)_{sa}+\mathbf{i}(A_J)_{sa}$ is true ($\mathbf{i}$ is the complex imaginary unit).
By properness, $(A_J)_{sa}=\{0\}$. 
As we recalled in Theorem \ref{Studia} above, every element of $A_J$ is nilpotent. 
Thus for an $a\in(A_J)_{sa}$, let $n\in\mathbb{N}$ be the minimal positive integer such that $a^n=0$. 
If $n\geq 2$, then $2n-2\geq n$ and $n-1\geq1$. 
So $a^{2n-2}=0$, and since $a^*=a$, it follows that $(a^{n-1})^*(a^{n-1})=0$. 
The involution is proper, which implies $a^{n-1}=0$. 
This contradicts the minimality of $n$, thus $n=1$, i.e., $a=0$.   
 
(iii)$\Rightarrow$(i): This follows from
Theorem \ref{Studia}, since $\gamma_A$ is a pre-$C^*$-norm on $A$.

Now we show the implications (i)$\Rightarrow$(iv)$\Rightarrow$(iii).

(i)$\Rightarrow$(iv): Let $\Vert\cdot\Vert$ be a pre-$C^*$-norm on $A$. 
Fix a non-zero $b\in A_{sa}$ selfadjoint element, and let $B$ be the $^*$-subalgebra generated by $b$, which is finite dimensional since $b$ is algebraic. 
Hence $(B,\Vert\cdot\Vert|_{B})$ is a non-zero finite dimensional commutative $C^*$-algebra.
Since $b$ is selfadjoint, the well-known commutative Gelfand-Naimark theorem and spectral theory imply the existence of a finite system $(\lambda_j)_{j\in J}$ of non-zero distinct real numbers and a finite system $(e_j)_{j\in J}$ of non-zero orthogonal projections in $B$ such that
\[
b=\sum_{j\in J}\lambda_je_j.
\] 

(iv)$\Rightarrow$(iii): 
Let $b\in A_{sa}$ be an arbitrary selfadjoint element. 
From the assumption (iv) write $b=\sum_{j\in J}\lambda_je_j$ with a finite system $(\lambda_j)_{j\in J}$ of non-zero distinct real numbers and a finite system $(e_j)_{j\in J}$ of non-zero orthogonal projections in $A$. 
Then an easy calculation shows that $b$ is a root of the polynomial 
\[
p(z)=z\prod_{j\in J}(z-\lambda_j).
\]
Hence the spectral mapping property implies that 
\[
{\Sp}_A(b)\subseteq\{\lambda_j|j\in J\}\cup\{0\}\subseteq\mathbb{R},
\] 
which means that $A$ is hermitian. 

To show semisimplicity, assume that $b\in(A_J)_{sa}\setminus\{0\}$.
The Jacobson radical is an ideal, hence for any $j_0\in J$ from the orthogonality we have
\[
e_{j_0}=\left(\frac{1}{\lambda_{j_0}}e_{j_0}\right)b\in A_J.
\]
But this is absurd, since the radical does not contain non-zero idempotent elements. 
From this it follows that $(A_J)_{sa}=\{0\}$, thus $A_J=\{0\}$. 
The equivalence of (i), (ii), (iii) and (iv) has been proved; let us proceed to the properties (v) and (vi).

(ii) \& (iv)$\Rightarrow$(v): 
According to Definition \ref{Rickartdef}, we have to prove for any $a\in A$ that there is a projection $e\in A$ with the following properties:
\begin{enumerate}
\item[(1)] $ae=a$;
\item[(2)] $ay=0$ implies $ey=0$ for every $y\in A$.
\end{enumerate}
We may suppose that $a\neq0$. 
From the assumption (ii) we infer that $a^*a\neq0$, thus by (iv) we may write
\[
a^*a=\sum_{j\in J}\lambda_je_j
\] 
with a finite system $(\lambda_j)_{j\in J}$ of non-zero distinct real numbers and a finite system $(e_j)_{j\in J}$ of non-zero orthogonal projections in $A$. 
Define the projection $e$ by the following equation:
\begin{equation}\label{RP}
e:=\sum_{j\in J}e_j.
\end{equation}
By orthogonality, it is indeed a projection.

Let us prove property (1), that is, $ae=a$.
The involution is proper, hence 
\[
ae=a\ \Leftrightarrow \ ae-a=0 \ \Leftrightarrow \ (ae-a)^*(ae-a)=0 \Leftrightarrow  
\]
\[
ea^*ae-a^*ae-ea^*a+a^*a=0, 
\]
and the last equation is true, since the obvious equalities 
\[
ea^*ae=a^*ae=ea^*a=a^*a 
\]
hold.

$(2)$: Let $y\in A$ be arbitrary with $ay=0$. 
Then $a^*ay=0$, that is,
\[
\left(\sum_{j\in J}\lambda_je_j\right)y=0.
\]
Since every $\lambda_j\neq0$, by orthogonality we conclude that
\[
0=\left(\sum_{j\in J}\frac{1}{\lambda_j}e_j\right)\left(\sum_{j\in J}\lambda_je_j\right)y=\left(\sum_{j\in J}e_j\right)y=ey,
\]
hence $A$ is a weakly Rickart $^*$-ring.

(v)$\Rightarrow$(ii):
Any weakly Rickart $^*$-ring has a proper involution (subsection \ref{algpreintro} in the Introduction).

(ii), (iii) $\&$ (iv)$\Rightarrow$(vi): 
Hermicity follows from (iii). 
To see the von Neumann regularity, let $a\in A$ be an arbitrary non-zero element. 
We seek an $x\in A$ such that $axa=a$. 
Similar to the preceding proofs, by (ii) and (iv) we may write 
\[
a^*a=\sum_{j\in J}\lambda_je_j
\] 
with a finite system $(\lambda_j)_{j\in J}$ of non-zero distinct real numbers and a finite system $(e_j)_{j\in J}$ of non-zero orthogonal projections in $A$. 
Define the element $x$ by the equation
\[
x:=\left(\sum_{j\in J}\frac{1}{\lambda_j}e_j\right)a^*.
\]
Then
\[
axa=a\left[\left(\sum_{j\in J}\frac{1}{\lambda_j}e_j\right)a^*\right]a=a\left(\sum_{j\in J}\frac{1}{\lambda_j}e_j\right)\left(\sum_{j\in J}\lambda_je_j\right)=
\]
\[
a\left(\sum_{j\in J}e_j\right)=a.
\]
(The last equation is property $(1)$ in the proof of the implication (ii) $\&$ (iv)$\Rightarrow$(v).)

(vi)$\Rightarrow$(iii): A von Neumann regular algebra is always semisimple: if $a\in A_J$, $x\in A$ and $axa=a$, then $axax=ax$, that is, $ax$ is an idempotent in the radical. Thus, $ax=0$, which implies $a=0$.
\end{proof}

\begin{remark}
We note here that if there is a complete pre-$C^*$-norm on the algebraic $^*$-algebra $A$, then $A$ is finite dimensional.
In fact, by Kaplansky's Lemma 7 in \cite{Kap}, an infinite dimensional semisimple Banach algebra has an element with infinite spectrum. Thus, by Lemma \ref{spegyen}, every semisimple algebraic Banach algebra is finite dimensional.
Furthermore, a von Neumann regular Banach algebra is finite dimensional as well (this is also Kaplansky's theorem: \cite{Palmer1}, Theorem 2.1.18).
\end{remark}

\begin{remark}\label{tulok}
Let $A$ be an algebraic $^*$-algebra.
\begin{enumerate}
\item Semisimplicity does not imply that $A$ is hermitian (see Theorem 9.7.22 in \cite{Palmer2}), and the same is true for von Neumann regularity (semisimple finite dimensional $^*$-algebras are von Neumann regular, including the non-hermitian ones).

\item If $A$ satisfies the properties of the previous theorem (hence it is a weakly Rickart $^*$-ring), then for an element $a\in A$ the right projection $\mathrm{RP}(a)$ of $a$ is the projection $e$ in \eqref{RP}. 
That is, if 
\[
a^*a=\sum_{j\in J}\lambda_je_j
\] 
for a finite system $(\lambda_j)_{j\in J}$ of non-zero distinct real numbers and a finite system $(e_j)_{j\in J}$ of non-zero orthogonal projections in $A$, then  
\begin{equation*}\label{RP2}
\mathrm{RP}(a)=\sum_{j\in J}e_j=\mathrm{RP}(a^*a).
\end{equation*}
Furthermore, from the proof of the theorem it can be easily seen that the projections $e_j$ $(j\in J)$ are in the subalgebra generated by $a^*a$. 
It follows that the right projection of any $a\in A$ is in the $^*$-subalgebra generated by $a$.  

\item By means of Theorem \ref{Studia}, one can show that property (iv) actually characterizes algebraic pre-$C^*$-algebras in the class of $^*$-algebras.
Namely, a $^*$-algebra $A$ is an algebraic $^*$-algebra with a pre-$C^*$-norm if and only if every non-zero selfadjoint $b\in A_{sa}$ can be written in a finite sum
\begin{equation}\label{projszum}
b=\sum_{j\in J}\lambda_je_j,
\end{equation}
where $(\lambda_j)_{j\in J}$ is a finite system of non-zero distinct complex numbers and $(e_j)_{j\in J}$ is a finite system of non-zero orthogonal projections in $A$. 
In this case every normal element $b$ of $A$ has the form \eqref{projszum}. 
Moreover,
\[
{\Sp}_A(b)\cup\{0\}=\{\lambda_j|j\in J\}\cup\{0\}
\]   
holds for the spectrum of $b$.
The numbers $\lambda_j$ are real when $b$ is selfadjoint, and they are non-negative if $b$ is a positive element, since
$A$ is completely symmetric by Theorem \ref{Studia}.
\end{enumerate}
\end{remark}

Now we list further properties of general algebraic pre-$C^*$-algebras (some of them were shown in the authors' paper \cite{SzT}).
These are indispensable in Section \ref{struct}.

\begin{proposition}\label{preCprops}
If $A$ is an algebraic pre-$C^*$-algebra, then the following assertions hold.
\begin{itemize}
\item[(1)] The pre-$C^*$-norm on $A$ is unique.
\item[(2)] Denote by $C^*(A)$ the completion of $A$ with respect to the unique pre-$C^*$-norm.
Then for every closed ideal $I\subseteq C^*(A)$ the equation $I=\overline{A\cap I}$ holds, where the bar means the closure in the norm of $C^*(A)$.

\item[(3)] $A$ satisfies the \textbf{(EP)-axiom} (\cite{Be1}, Definition 1 on page 43), that is, for any $a\in A\setminus\{0\}$ there exists a $w\in\{a^*a\}''$ such that $w=w^*$ and $(a^*a)w^2$ is a non-zero projection.

\item[(4)] $A$ satisfies the \textbf{(UPSR)-axiom} (\cite{Be1}, Definition 10 on page 70), that is, for any $x\in A_+$ there is a unique $y\in A_+$ such that $y^2=x$ and in addition $y\in\{x\}''$.

\item[(5)] $A$ is the linear span of its projections.
\end{itemize}
Moreover, if $A$ has a unit element, then:
\begin{itemize}
\item[(6)] $a^*a\leq\Vert a\Vert^2\mathbf{1}$ holds for any $a\in A$. 

\item[(7)] $A$ is a \textbf{directly finite} algebra, that is, for any $a,b\in A$ the equation $ab=\mathbf{1}$ implies that $ba=\mathbf{1}$. 
\end{itemize}
\end{proposition}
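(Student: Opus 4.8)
The plan is to prove each item by reducing to the commutative Gelfand--Naimark theorem applied to the finite-dimensional $^*$-subalgebras generated by individual (self-adjoint or normal) elements, together with the basic spectral decomposition (iv) of Theorem \ref{expreC}. Throughout, write $\Vert\cdot\Vert$ for the given pre-$C^*$-norm and recall from Theorem \ref{expreC} that $A$ is hermitian, semisimple, weakly Rickart, von Neumann regular, and completely symmetric; and from Remark \ref{tulok}(3) that every normal element $b$ has the form $b=\sum_{j\in J}\lambda_je_j$ with non-zero distinct $\lambda_j$ and orthogonal non-zero projections $e_j$, with $\Sp_A(b)\cup\{0\}=\{\lambda_j\}\cup\{0\}$ and $\lambda_j\ge 0$ when $b\in A_+$.

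First I would dispatch the easy algebraic items. For (1): if $\sigma$ is any pre-$C^*$-norm, then for self-adjoint $b$ one has $\sigma(b)=\sup\{|\lambda|:\lambda\in\Sp_A(b)\}$ by the $C^*$-identity and the spectral-radius formula in the (finite-dimensional, hence complete) $^*$-subalgebra generated by $b$ — this is intrinsic, so $\sigma(b)$ is determined by $A$; then $\sigma(a)^2=\sigma(a^*a)$ pins down $\sigma$ on all of $A$. For (5): this is immediate from Remark \ref{tulok}(3), since $A=A_{sa}+\mathbf iA_{sa}$ and each self-adjoint element is a finite real-linear combination of projections. For (7): suppose $ab=\mathbf 1$; then $ba$ is an idempotent, and $1-ba\in\mathrm{ann}_r(\{a\})$... more cleanly, $C^*(A)$ is a unital $C^*$-algebra, hence directly finite by the standard argument ($\Vert ba\Vert$ considerations and the fact that in a $C^*$-algebra $ab=\mathbf1$ forces $a$ invertible via $aba=a$ and injectivity from properness of the involution, or cite that unital $C^*$-algebras are directly finite), and $A\subseteq C^*(A)$ inherits this. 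For (6): in the unital $C^*$-algebra $C^*(A)$ we have $a^*a\le\Vert a^*a\Vert\mathbf1=\Vert a\Vert^2\mathbf1$, and since $A$ is unital with the order inherited (the orders agree on projections, and $\Vert a\Vert^2\mathbf1-a^*a$ is a positive element of $C^*(A)$ lying in $A$, which is completely symmetric so its $A$-spectrum is non-negative), the inequality holds in $A$.

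Next, (2): this is the slightly more substantial point. Let $I\subseteq C^*(A)$ be a closed ideal; the inclusion $\overline{A\cap I}\subseteq I$ is trivial, so the content is density of $A\cap I$ in $I$. The idea is that $C^*(A)/I$ is a $C^*$-algebra, and the composition $A\hookrightarrow C^*(A)\to C^*(A)/I$ has dense range; one shows this map is again an algebraic pre-$C^*$-algebra (a $^*$-homomorphic image of an algebraic $^*$-algebra with proper involution, the latter because it embeds in a $C^*$-algebra), hence its image is norm-closed in $C^*(A)/I$ by the completeness remark following Theorem \ref{expreC} — wait, that remark says a complete algebraic pre-$C^*$-algebra is finite dimensional, which need not apply. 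Instead I would argue directly: take $x\in I$, $x=x^*$ without loss of generality, and approximate; since $A$ is dense in $C^*(A)$, pick $a\in A$ with $a^*=a$ close to $x$; then $a$ has finite spectral decomposition $a=\sum\lambda_je_j$, and applying the continuous functional calculus trick (replace $\lambda_j$ by $0$ for those $j$ with $|\lambda_j|$ small) one gets an element of $A$; the point is to produce an element of $A\cap I$ nearby. The cleanest route: the quotient map $q:C^*(A)\to C^*(A)/I$ restricted to $A$ is a $^*$-homomorphism onto a dense $^*$-subalgebra of the $C^*$-algebra $C^*(A)/I$; since $A$ is algebraic, $q(A)$ is an algebraic $^*$-algebra with a $C^*$-seminorm whose kernel is $q(A)\cap$ (nothing), i.e. $q(A)$ is an algebraic pre-$C^*$-algebra, and by Remark \ref{tulok}(3) every self-adjoint element of $q(A)$ is a finite combination of projections of $q(A)$; lifting each such projection of $q(A)$ — here is the crux — to a projection of $A$ inside $q^{-1}(I)^{?}$... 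Actually the honest statement is: in a Rickart $^*$-ring, given $a\in A$, $\mathrm{RP}(a)$ generates the same closed ideal as $a$ (by Remark \ref{tulok}(2), $\mathrm{RP}(a)$ lies in the $^*$-subalgebra generated by $a$, and conversely $a=a\,\mathrm{RP}(a)$); so for $x\in I$, approximating $x$ by $a\in A$ and then the element $a' := a\cdot\mathrm{RP}(a^*a)$-type corrections land in $A\cap I$ provided $a$ was chosen from $A\cap q^{-1}(\text{small ball})$. I expect assembling this approximation argument carefully is the main obstacle, and would lean on the EP/UPSR structure (items (3),(4)) to make the functional-calculus truncations rigorous inside $A$.

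Finally, (3) and (4) follow directly from Remark \ref{tulok}(3) and the bicommutant remarks in the Introduction. For (4) (UPSR): given $x\in A_+$, write $x=\sum_{j\in J}\lambda_je_j$ with $\lambda_j>0$ distinct and $e_j$ orthogonal projections lying in the subalgebra generated by $x$ (hence in $\{x\}''$); set $y:=\sum_{j\in J}\sqrt{\lambda_j}\,e_j\in A_+$, which satisfies $y^2=x$ and $y\in\{x\}''$; uniqueness follows since any positive square root $z$ of $x$ commutes with $x$, hence lies in the commutative $C^*$-algebra generated by $x$ and $z$ inside $C^*(A)$, where square roots of positive elements are unique — and being in $A$ it equals $y$. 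For (3) (EP): given $a\neq 0$, properness gives $a^*a\neq 0$, so $a^*a=\sum_{j\in J}\lambda_je_j$ with $\lambda_j>0$; fix any $j_0$ and set $w:=\lambda_{j_0}^{-1}e_{j_0}$; then $w=w^*$, $w\in\{a^*a\}''$ (as $e_{j_0}$ is a polynomial in $a^*a$ by the standard Lagrange-interpolation argument, the same one used in Remark \ref{tulok}(2)), and $(a^*a)w^2=\lambda_{j_0}^{-2}(a^*a)e_{j_0}=\lambda_{j_0}^{-1}e_{j_0}\neq 0$ is a non-zero projection. This completes the list; only item (2) requires genuine work, the rest being corollaries of the spectral decomposition already established.
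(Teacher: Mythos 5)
Your items (1), (4), (5) and (6) are sound and follow essentially the paper's route (the paper delegates (1) and (2) to Theorem 4.1 of \cite{SzT} and proves (3)--(7) via exactly the spectral decomposition you invoke). But there are three concrete problems. In (3), your choice $w:=\lambda_{j_0}^{-1}e_{j_0}$ gives $(a^*a)w^2=\lambda_{j_0}^{-2}(a^*a)e_{j_0}=\lambda_{j_0}^{-1}e_{j_0}$, which is a projection only when $\lambda_{j_0}=1$; you need $w:=\lambda_{j_0}^{-1/2}e_{j_0}$ (the paper takes $w=\sum_j\lambda_j^{-1/2}e_j$, which yields $(a^*a)w^2=\mathrm{RP}(a)$). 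In (7), you lean on the claim that unital $C^*$-algebras are directly finite, which is false: the unilateral shift $S$ on $\ell^2$ satisfies $S^*S=\mathbf{1}\neq SS^*$. Your fallback --- that $ab=\mathbf{1}$ forces $a$ invertible ``via $aba=a$ and properness'' --- also fails, since properness applied to $a(ba-\mathbf{1})=0$ only returns the hypothesis. Direct finiteness here genuinely uses algebraicity: by \eqref{szorzatsp} in Lemma \ref{spegyen}, ${\Sp}_A(ba)={\Sp}_A(ab)=\{1\}$, so the idempotent $ba$ is invertible and therefore equals $\mathbf{1}$ (equivalently, one can run the standard polynomial argument showing that algebraic algebras are directly finite).

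The most serious issue is item (2), which you do not actually prove: you sketch several strategies and abandon each, ending with the admission that assembling the approximation argument is ``the main obstacle.'' That approximation \emph{is} the entire content of the claim, so as written there is a genuine gap. The paper itself only cites Theorem 4.1 $(f)$ of \cite{SzT}, so a blind proof must supply the argument; a workable route is the standard ideal-approximation lemma for $C^*$-algebras: for self-adjoint $x\in I$ and $a\in A_{sa}$ with $\Vert a-x\Vert<\varepsilon$, the spectral truncation of $a=\sum_j\lambda_je_j$ obtained by discarding the terms with $|\lambda_j|\leq\varepsilon$ lies in $A$ (being a subsum of the decomposition) and lies in $I$ (being $h(a)$ for a continuous $h$ vanishing on $[-\varepsilon,\varepsilon]$, hence expressible through $x$ by the usual $(a-\varepsilon)_+=d^*xd$ trick), and is within $2\varepsilon$ of $x$. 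You gesture at this but never close the loop on why the truncated element belongs to $I$.
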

\begin{proof}
$(1)$: Theorem 4.1 $(a)$ in \cite{SzT} shows this.

$(2)$: This statement is Theorem 4.1 $(f)$ in \cite{SzT}.

$(3)$: Let $a\in A$ be a non-zero element. 
Since $a^*a\geq0$, by Theorem \ref{expreC} and Remark \ref{tulok} $(3)$, we may write 
\[
a^*a=\sum_{j\in J}\lambda_je_j
\] 
with a finite system $(\lambda_j)_{j\in J}$ of non-zero distinct positive numbers and a finite system $(e_j)_{j\in J}$ of non-zero orthogonal projections in $A$. 
Then the element 
\[
w=\sum_{j\in J}\frac{1}{\sqrt{\lambda_j}}e_j
\]
 is selfadjoint (in fact, positive), moreover orthogonality implies
\[
(a^*a)w^2=\left(\sum_{j\in J}\lambda_je_j\right)\left(\sum_{j\in J}\frac{1}{\lambda_j}e_j\right)=\sum_{j\in J}e_j=\mathrm{RP}(a),
\] 
which is a non-zero projection, as desired.
The property $w\in\{a^*a\}''$ for the bicommutant  also holds, since $w$ is an element of the $^*$-subalgebra generated by $a^*a$ according to  Remark \ref{tulok} $(2)$ .

$(4)$: Let $x\in A_+$ be a non-zero positive element. 
Then $x$ is of the form 
\[
x=\sum_{j\in J}\lambda_je_j
\] 
with a finite system $(\lambda_j)_{j\in J}$ of non-zero distinct positive numbers and a finite system $(e_j)_{j\in J}$ of non-zero orthogonal projections in $A$. 
Obviously the element 
\[
y:=\sum_{j\in J}\sqrt{\lambda_j}e_j
\]
 is positive and orthogonality shows that $y^2=x$.

To see the uniqueness, let $z\in A$ be a positive element such that $z^2=x$. 
Let $A_z$ (resp. $A_x$) denote the $^*$-subalgebra generated by $z$ (resp. $x$).
Then $A_x\subseteq A_z$, moreover these are finite dimensional $C^*$-algebras by the algebraicity. 
From Remark \ref{tulok} $(2)$, it follows that $y\in A_x$, hence $y\in A_z$.
But $y^2=x=z^2$ and it is well-known that every positive element has a \emph{unique} positive square root  in a $C^*$-algebra, thus $y=z$. 
The inclusion $y\in\{x\}''$ is also clear from $y\in A_x$.

$(5)$:  The equality $A=A_{sa}+\mathbf{i}A_{sa}$ holds for the $^*$-algebra $A$. 
Thus, Theorem \ref{expreC} (iv) shows that every element of $A$ can be written as a linear combination of the projections in $A$.

For the rest of the proof, assume that $A$ is unital.

$(6)$: For an arbitrary $a\in A$, the $^*$-subalgebra generated by $a^*a$ and $\mathbf{1}$ is a finite dimensional $C^*$-algebra.
Hence spectral theory and the $C^*$-property imply the inequality $a^*a\leq\Vert a\Vert^2\mathbf{1}$. 

$(7)$: This follows from \eqref{szorzatsp} in Lemma \ref{spegyen}.
\end{proof}

As we noted in the Introduction \ref{Baerintro}, every Baer $^*$-ring is a Rickart $^*$-ring. Hence 
for an algebraic $^*$-algebra which is also a Baer $^*$-ring, the properties in Theorem \ref{expreC} and Proposition \ref{preCprops} come true. 
We formulate this in the next statement.

\begin{corollary}\label{Baerprops}
If $A$ is an algebraic Baer $^*$-algebra, then $A$ is a directly finite, von Neumann regular and completely symmetric $^*$-algebra with a unique pre-$C^*$-norm $\Vert\cdot\Vert$, satisfying the (EP)- and (UPSR)-axioms. 
Moreover, $A$ is the linear span of its projections, and for any $a\in A$ the inequality $a^*a\leq\Vert a\Vert^2\mathbf{1}$ holds.
\end{corollary}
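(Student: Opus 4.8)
The plan is to assemble this statement directly from Theorem \ref{expreC}, Proposition \ref{preCprops} and Theorem \ref{Studia}; there is no genuinely new content here, so the proof is a matter of checking that the hypotheses of those results are met. First I would recall, as noted in subsection \ref{Baerintro}, that every Baer $^*$-ring is a Rickart $^*$-ring, and that a Rickart $^*$-ring is precisely a weakly Rickart $^*$-ring with unity. Hence an algebraic Baer $^*$-algebra $A$ satisfies condition (v) of Theorem \ref{expreC}, and is in addition unital. By the equivalences in Theorem \ref{expreC}, $A$ then also satisfies (i), (ii), (iii) and (vi): in particular $A$ carries a pre-$C^*$-norm $\Vert\cdot\Vert$, the involution is proper, and $A$ is hermitian, semisimple and von Neumann regular.

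Next I would invoke Proposition \ref{preCprops} with the observation that $A$ is now an algebraic pre-$C^*$-algebra, and moreover a unital one. Part $(1)$ gives the uniqueness of the pre-$C^*$-norm; parts $(3)$ and $(4)$ give the (EP)- and (UPSR)-axioms; part $(5)$ gives that $A$ is the linear span of its projections; and since $A$ is unital, parts $(6)$ and $(7)$ yield the inequality $a^*a\leq\Vert a\Vert^2\mathbf{1}$ for every $a\in A$ and the direct finiteness of $A$. It remains only to record complete symmetry: since $A$ is algebraic every selfadjoint element of $A$ is algebraic, and $A$ is hermitian by the previous paragraph, so Theorem \ref{Studia} applies and shows that $A$ is a completely symmetric algebraic $^*$-algebra.

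There is essentially no obstacle to overcome; the only point requiring a moment of care is to make explicit that a Baer $^*$-ring is unital (so that the unital clauses $(6)$–$(7)$ of Proposition \ref{preCprops} are available) and that it is Rickart (so that Theorem \ref{expreC}(v) is satisfied), both of which are stated in the Introduction. I would therefore keep the proof to a few lines, simply citing Theorem \ref{expreC}, Proposition \ref{preCprops} and Theorem \ref{Studia} in the order above.
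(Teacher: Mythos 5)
Your proposal is correct and follows exactly the route the paper intends: the paper derives this corollary by observing that a Baer $^*$-ring is a Rickart (hence unital weakly Rickart) $^*$-ring, so Theorem \ref{expreC}(v) and its equivalences apply, after which Proposition \ref{preCprops} and Theorem \ref{Studia} supply all the listed properties. Nothing is missing.
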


To close this section we present the following lemma which characterizes finite dimensionality of algebraic $^*$-algebras with pre-$C^*$-norms.
It is used in the proof of Theorem \ref{noabelian}. 

\begin{lemma}\label{Kaplansky}
An algebraic pre-$C^*$-algebra $A$ is finite dimensional if and only if every orthogonal system of non-zero projections in $A$ is finite.
\end{lemma}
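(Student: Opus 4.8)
For the implication from finite-dimensionality there is nothing to do: any family of pairwise orthogonal non-zero projections in an algebra is linearly independent (multiplying a relation $\sum_{i}c_{i}f_{i}=0$ on the right by a fixed $f_{k}$ gives $c_{k}f_{k}=0$, hence $c_{k}=0$), so in a finite-dimensional $A$ such a family has at most $\dim A$ members. The substance is the converse, for which the plan is to write the unit of $A$ as a finite sum of minimal projections and then bound $\dim A$ by the square of their number via a Peirce decomposition.

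So assume every orthogonal family of non-zero projections in $A$ is finite, with $A\neq\{0\}$. The first observation is that this hypothesis is really a chain condition: a strictly increasing chain $g_{1}<g_{2}<\cdots$ of projections yields the infinite orthogonal family $\{g_{n+1}-g_{n}\}_{n}$ (and similarly for decreasing chains), so $\mathbf{P}(A)$ satisfies both ACC and DCC. By Theorem \ref{expreC} the algebraic pre-$C^{*}$-algebra $A$ is a weakly Rickart $^{*}$-ring, and by Proposition \ref{preCprops}(5) it is the linear span of its projections; in particular $A$ has a non-zero projection, and by DCC a minimal one below it. By Zorn's lemma (using the hypothesis to bound chains) there is an orthogonal family $\{a_{1},\dots,a_{m}\}$ of non-zero minimal projections that is maximal with respect to inclusion. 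I claim $g:=a_{1}+\cdots+a_{m}$ is a unit for $A$: for any projection $p\in A$ the element $q:=(p\vee g)-g$, formed with the lattice operations of the weakly Rickart $^{*}$-ring $A$, is a projection (since $g\leq p\vee g$) orthogonal to $g$ and hence to each $a_{i}$; were $q\neq0$ it would dominate a non-zero minimal projection orthogonal to every $a_{i}$, contradicting maximality, so $q=0$, i.e.\ $p\leq g$. Since $A=\mathrm{span}\,\mathbf{P}(A)$, this gives $ag=ga=a$ for all $a\in A$, so $\mathbf{1}=g=a_{1}+\cdots+a_{m}$.

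Now I localize at the minimal projections $a_{i}$. Each corner $a_{i}Aa_{i}$ is again an algebraic pre-$C^{*}$-algebra (the restricted norm works) and is unital with unit $a_{i}$; by minimality of $a_{i}$ its only projections are $0$ and $a_{i}$, so Theorem \ref{expreC}(iv) applied inside $a_{i}Aa_{i}$ shows every non-zero selfadjoint element there is a scalar multiple of $a_{i}$, whence $a_{i}Aa_{i}=\mathbb{C}a_{i}$. The Peirce decomposition relative to $\mathbf{1}=a_{1}+\cdots+a_{m}$ gives $A=\bigoplus_{i,j=1}^{m}a_{i}Aa_{j}$, so it suffices to bound each $a_{i}Aa_{j}$. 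Fix $i,j$ and a non-zero $x\in a_{i}Aa_{j}$; properness forces $x^{*}x\in a_{j}Aa_{j}=\mathbb{C}a_{j}$ and $xx^{*}\in a_{i}Aa_{i}=\mathbb{C}a_{i}$ to be non-zero, say $x^{*}x=ca_{j}$ and $xx^{*}=c'a_{i}$ with $c,c'>0$. For arbitrary $y\in a_{i}Aa_{j}$ write $x^{*}y=da_{j}$ (again $x^{*}y\in\mathbb{C}a_{j}$); then $x^{*}\bigl(y-\tfrac{d}{c}x\bigr)=0$, so
\[
y-\tfrac{d}{c}x=a_{i}\bigl(y-\tfrac{d}{c}x\bigr)=(c')^{-1}xx^{*}\bigl(y-\tfrac{d}{c}x\bigr)=(c')^{-1}x\bigl(x^{*}y-\tfrac{d}{c}x^{*}x\bigr)=0,
\]
and therefore $a_{i}Aa_{j}=\mathbb{C}x$. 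Consequently $\dim A\leq m^{2}<\infty$.

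The one genuinely delicate point is the passage from \emph{every orthogonal family is finite} to \emph{the sizes are bounded}: a priori there could be orthogonal families of every finite cardinality with no largest one, and then no König-type argument is available, since a projection can be split into two in infinitely many ways. This is exactly what the chain-condition reformulation settles --- DCC forces any increasing union of orthogonal families of minimal projections to stabilize, so an inclusion-maximal such family genuinely exists --- after which the corner computation above is the whole content of the argument.
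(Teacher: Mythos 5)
Your proof is correct, but it takes a genuinely different route from the paper's. The paper reduces to the unital case by unitization, notes that the hypothesis rules out strictly increasing sequences of projections, and then invokes von Neumann regularity (Theorem \ref{expreC}(vi)) plus the fact that finitely generated right ideals of a regular ring with proper involution are generated by projections; this yields the ascending chain condition on finitely generated right ideals, so $A$ is Artinian by a lemma of Passman, and finally $A$ is a normed modular annihilator algebra, which Palmer's Proposition 8.4.14 forces to be finite dimensional. You instead work entirely inside the projection lattice: the hypothesis gives ACC and DCC on projections, an inclusion-maximal orthogonal family $a_1,\dots,a_m$ of minimal projections exists, its sum is shown to be the unit (using the lattice operations of the weakly Rickart structure and Proposition \ref{preCprops}(5)), each diagonal corner $a_iAa_i$ collapses to $\mathbb{C}a_i$ by Theorem \ref{expreC}(iv), and the off-diagonal Peirce corners $a_iAa_j$ are at most one-dimensional by a direct computation with $x^*x$ and $xx^*$ using properness; hence $\dim A\le m^2$. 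Your argument is more elementary and self-contained --- it avoids the external ring-theoretic machinery and produces an explicit dimension bound, in effect re-deriving the Wedderburn decomposition of $A$ by hand --- at the cost of being somewhat longer. One small remark on your closing paragraph: the existence of the maximal orthogonal family is not really a consequence of DCC; Zorn's lemma applies because the union of a chain of orthogonal families of minimal projections is again such a family, and it is the hypothesis itself that then guarantees the maximal family is finite. This is what you actually do in the body of the proof, so only the commentary is slightly off.
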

\begin{proof}
If $A$ is finite dimensional, then it is obvious that every orthogonal system of non-zero projections in $A$ is finite, since such a system is linearly independent.

For the converse, suppose that every orthogonal system of non-zero projections in $A$ is finite. 
Note first that it is enough to prove finite dimensionality in the case of unital $A$.
Indeed, if $A$ is not unital, then let $A^1:=\mathbb{C}\oplus A$ be the standard unitization of the $^*$-algebra $A$. 
Now it is clear that $A^1$ is also an algebraic $^*$-algebra with a pre-$C^*$-norm (\cite{Palmer2}, Proposition 9.1.13 $(b)$).
Assume that $(\lambda_k,p_k)_{k\in\mathbb{N}}$ is an infinite orthogonal system of projections in $A^1$.
Then orthogonality shows for every $k,m\in\mathbb{N}$, $k\neq m$ that 
\[
(0,0)=(\lambda_k,p_k)(\lambda_m,p_m)=(\lambda_k\lambda_m,\lambda_kp_m+\lambda_mp_k+p_kp_m),
\]
hence $\lambda_k\neq0$ holds for at most one $k\in\mathbb{N}$.
Dropping out this possible exception, the system has the form $(0,p_k)_{k\in\mathbb{N}}$.
Now it is clear that $(p_k)_{k\in\mathbb{N}}$ would be an infinite orthogonal system of projections in $A$, which is impossible according to the assumption. 
Thus, all of the assertions we made for $A$ are true for $A^1$. 
Moreover if $A^1$ is finite dimensional, then $A$ is also finite dimensional.

Suppose that $A$ is a unital algebraic pre-$C^*$-algebra such that every orthogonal system of non-zero projections in $A$ is finite.
Then it is easy to see that $A$ does not contain a strictly increasing sequence of non-zero projections. 
Indeed, if $(q_k)_{k\in\mathbb{N}}$ is such sequence, then the projections $p_0:=q_0$, $p_{k+1}:=q_{k+1}-q_{k}$ $(k\in\mathbb{N})$ would be form an infinite orthogonal system of non-zero projections.

By Theorem \ref{expreC} (vi), the $^*$-algebra $A$ is von Neumann regular, hence every finitely generated right ideal $I$ of $A$ is of the form $eA$ with an idempotent $e\in A$ (see Lemma 1.3 on page 68 in \cite{Passman2} or Theorem 1.1 in \cite{G}). 
Furthermore, since the involution of $A$ is proper (Theorem \ref{expreC} (ii)), Proposition 3 on page 229 in \cite{Be1} implies that we may assume that $e$ is a projection. 
This and the preceding paragraph imply that $A$ satisfies the ascending chain condition for finitely generated right ideals.
Hence, by Lemma 2.9 on page 80 in \cite{Passman2}, we obtain that $A$ is Artinian. 
In fact, the proof of this lemma and the arguments above imply that every right ideal of $A$ is generated by a projection. 
Thus, the unital $A$ is a \emph{modular annihilator algebra} (\cite{Palmer1}, Definition 8.4.6), because it is semisimple (hence semiprime: \cite{Palmer1}, Definition 4.4.1) and obviously satisfies property $(a)$ of Theorem 8.4.5 in \cite{Palmer1}.
Now Proposition 8.4.14 of \cite{Palmer1} implies that $A$ is finite dimensional, since it is a normed algebra.
\end{proof}

\section{Algebraic Baer $^*$-algebras}\label{struct}

The main purpose of this section is to answer the following questions. Theorem \ref{main} answers them.
\begin{enumerate}
\item In the light of Corollary \ref{Baerprops}, what other properties an algebraic Baer $^*$-algebra has?  
\item Are there examples of algebraic Baer $^*$-algebras different than the ones given in Examples \ref{finitedim} and \ref{kommutativ}? 
\end{enumerate}

The lattice of projections of a Baer $^*$-ring is complete (\cite{Be1}, §15) and these rings have well-developed structure theory. We recall some relevant definitions.
We say that a $^*$-ring is \emph{Abelian} if all of its projections are central.
A $^*$-ring $A$ is \emph{properly non-Abelian}, if the only Abelian central projection is $0$, i.e, if for a central projection $e\in A$ the $^*$-ring $eAe$ is Abelian, then $e=0$. 

By Corollary \ref{Baerprops}, an algebraic Baer $^*$-algebra $A$ is Abelian iff $A$ is commutative, since it is the linear span of its projections (the same equivalence is true for $AW^*$-algebras: \cite{Be1}, Examples on page 90).

The key to our structure Theorem \ref{main} is the following decomposition (\cite{Be1}, Theorem 1 (2) in §15): 

\begin{theorem}\label{strukszum}
Let $A$ be a Baer $^*$-ring.
Then there exists a unique central projection $h\in A$ such that in the decomposition
\begin{equation*}
A=(\mathbf{1}-h)A+hA
\end{equation*}
the summand $M:=(\mathbf{1}-h)A$ is a properly non-Abelian Baer $^*$-subring, while $B:=hA$ is an Abelian Baer $^*$-subring.

If $B=\{0\}$, then we say that $A$ has no Abelian summand.
\end{theorem}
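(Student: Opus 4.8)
The plan is to exhibit $h$ explicitly as the supremum of all ``Abelian central projections'' of $A$, and then to verify the three assertions --- that the two summands have the asserted types and that $h$ is unique --- by elementary computation in the projection lattice. I would first recall from \cite{Be1} that in a Baer $^*$-ring the center $Z(A)=\{A\}'$ is again a Baer $^*$-ring, so the central projections form a complete Boolean algebra whose suprema coincide with suprema formed in the complete lattice $\mathbf{P}(A)$ (Proposition \ref{RickartBaer}); moreover every projection $p$ admits a \emph{central cover} $C(p)$, the least central projection with $p\leq C(p)$, and for a central projection $e$ one has $C(p)e=0$ precisely when $pe=0$.

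The technical core is the claim that the supremum of any family of Abelian central projections is again an Abelian central projection. Its proof rests on a single observation: if $e$ is a central projection with $eAe$ Abelian and $q\leq e$ is a projection, then $q$ is central in all of $A$. Indeed $q\in eAe$, so $q$ is central in $eAe$; it commutes with $(\mathbf{1}-e)A=(\mathbf{1}-e)A(\mathbf{1}-e)$ since $q(\mathbf{1}-e)=0$ and, $e$ being central, $(\mathbf{1}-e)aq=(\mathbf{1}-e)(ea)q=0$ for all $a$; and, because $eA=eAe$, writing $a=ea+(\mathbf{1}-e)a$ shows that $q$ commutes with every $a\in A$.

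Granting this, set $\mathcal{A}:=\{e\in\mathbf{P}(A)\mid e\ \text{central and}\ eAe\ \text{is Abelian}\}$, let $h:=\sup\mathcal{A}$ (a central projection), and put $M:=(\mathbf{1}-h)A$, $B:=hA$. To see $h\in\mathcal{A}$, i.e. that $B=hAh$ is Abelian, take any projection $p\leq h$; the key point is that $p=\sup_{e\in\mathcal{A}}(pe)$. Writing $r$ for that supremum, one has $(p-r)e=0$ for every $e\in\mathcal{A}$ --- since $pe\leq r\wedge e$ while $r\wedge e=\sup_{e'\in\mathcal{A}}(pe'e)\leq pe$ --- so $C(p-r)$ is orthogonal to $h=\sup\mathcal{A}$ while also $C(p-r)\leq h$, forcing $p=r$. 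Now each $pe$ is a projection below $e$, hence central in $A$ by the observation above, so $p$ is a supremum of central projections, hence central; thus every projection of $hAh$ is central in $hAh$. Both $B$ and $M$ are Baer $^*$-subrings, being corners of $A$ by a projection.

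It remains to treat $M$ and uniqueness. If $f$ is a central projection of $M$ with $fMf$ Abelian then, $h$ being central, $M$ is a ring direct summand of $A$, so $f$ is central in $A$ and $fAf=fMf$; thus $f\in\mathcal{A}$, so $f\leq h$, and combined with $f\leq\mathbf{1}-h$ this gives $f=0$, i.e. $M$ is properly non-Abelian. For uniqueness, let $h'$ be a central projection with $h'A$ Abelian and $(\mathbf{1}-h')A$ properly non-Abelian; then $h'\in\mathcal{A}$ gives $h'\leq h$, and $g:=h-h'$ is a central projection with $g\leq h$ --- so $gAg$, a corner of the Abelian ring $hAh$, is Abelian --- and with $g\leq\mathbf{1}-h'$; hence $g$ is an Abelian central projection of the properly non-Abelian ring $(\mathbf{1}-h')A$, so $g=0$ and $h=h'$. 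I expect the only delicate step to be the technical core of the second paragraph --- and within it, the central-cover bookkeeping that validates $p=\sup_{e\in\mathcal{A}}(pe)$; everything else is immediate from the projection calculus.
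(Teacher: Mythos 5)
Your proof is correct. The paper itself gives no argument for this statement --- it is quoted from Berberian (\cite{Be1}, Theorem 1 in \S 15) --- and your construction of $h$ as the supremum of all Abelian central projections, together with the central-cover argument showing $p=\sup_{e}(pe)$ and the distributivity of central projections over suprema, is essentially Berberian's standard proof of the Abelian/properly-non-Abelian decomposition; all the lattice facts you invoke (the center $Z(A)=Z(A)''$ is a Baer $^*$-subring with compatible suprema, $e\sup_i p_i=\sup_i(ep_i)$ for $e$ central, and $pe=0\Leftrightarrow C(p)e=0$) hold in any Baer $^*$-ring and are used correctly.
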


By this theorem and the observations above, an algebraic Baer $^*$-algebra $A$ can be decomposed as a sum of algebraic Baer $^*$-subalgebras 
\begin{equation}\label{AbelnonAbel}
A=M\oplus B,
\end{equation}
where $M$ has no Abelian summand and $B$ is commutative.
Thus, if we fully analyze the properly non-Abelian and the commutative case separately, we get a complete description of general algebraic Baer $^*$-algebras.
This has been done in the following subsections.

\subsection{Algebraic Baer $^*$-algebras without Abelian summands}\label{noAbelian}

The following statement is a reformulation of Corollary 3 on page 231 in \cite{Be1} (the assumptions for $A$ can be found in the statement of this corollary and at the beginning of §51).
Note here that a unital $^*$-ring $A$ is finite iff $x^*x=\mathbf{1}$ implies $xx^*=\mathbf{1}$ for any $x\in A$.

\begin{lemma}\label{231}
Let $A$ be a finite, von Neumann regular and symmetric Baer $^*$-ring without Abelian summand, satisfying the (EP)- and (UPSR)-axioms. 
Assume that $A$ contains a central element $i\in A$ with the properties $i^2=-\mathbf{1}$ and $i^*=-i$.

If, for any $a\in A$, there is a positive integer $k$ such that $a^*a\leq k\mathbf{1}$, then every system of non-zero orthogonal projections in $A$ is finite. 
\end{lemma}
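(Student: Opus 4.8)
The plan is to argue by contradiction: suppose $A$ satisfies all the hypotheses of Lemma \ref{231} but contains an infinite orthogonal family $(e_n)_{n\in\mathbb{N}}$ of non-zero projections. The goal is to manufacture from this family a single element $a\in A$ for which no integer bound $a^*a\le k\mathbf{1}$ can hold, contradicting the last hypothesis. The natural candidate is something like $a=\sum_{n}c_n e_n$ with scalars $c_n\to\infty$; the obstacle, of course, is that such an infinite sum need not be an element of the (non-complete, merely pre-$C^*$-normed) algebra $A$. So the real content is to stay inside $A$ by exploiting the structure theory of finite Baer $^*$-rings without Abelian summand.

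First I would use the hypotheses to put $A$ into the form where ``parallelogram law'' / halving arguments work. Since $A$ is a finite, symmetric Baer $^*$-ring without Abelian summand satisfying (EP) and (UPSR), the theory in \cite{Be1} (§51 and the surrounding sections, which is exactly the context Corollary 3 on page 231 lives in) provides a \emph{dimension function} or at least the key consequence that every non-zero projection can be halved: any non-zero projection $e$ dominates two orthogonal equivalent non-zero subprojections $e_1\sim e_2$ with $e_1+e_2\le e$. Iterating this on $\mathbf{1}$ gives, for each $n$, an orthogonal family of $2^n$ mutually equivalent non-zero projections summing to $\mathbf{1}$; more to the point, finiteness plus von Neumann regularity lets one use partial isometries to transport matrix units around. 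The role of the central element $i$ with $i^2=-\mathbf 1$, $i^*=-i$ is to guarantee that the ring ``contains $\sqrt{-1}$'', so that self-adjoint elements with prescribed real spectral data on finitely many orthogonal projections genuinely exist in $A$ and one is not fighting a real-versus-complex obstruction.

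Then the heart of the argument: from the infinite orthogonal family $(e_n)$ and the inequality hypothesis I would derive a contradiction as follows. For each $N$, the element $x_N:=\sum_{n=1}^{N} n\,e_n$ lies in $A$ (a \emph{finite} sum) and is positive with $x_N^*x_N = x_N^2 = \sum_{n=1}^N n^2 e_n$, so $\Vert x_N\Vert\ge N$ (evaluate in the finite-dimensional $C^*$-subalgebra generated by $e_1,\dots,e_N$, using Corollary \ref{Baerprops}). By Corollary \ref{Baerprops} again, $a^*a\le\Vert a\Vert^2\mathbf 1$ for all $a$, and the hypothesis gives a \emph{single} integer $k$ with $a^*a\le k\mathbf 1$ for all $a$ — wait: the hypothesis quantifies $k$ per element, so I must instead transport the growing projections onto a \emph{fixed} pair of subprojections via partial isometries coming from finiteness/regularity, converting unbounded ``diagonal growth'' into a fixed location where the bound $a^*a\le k\mathbf 1$ for that one element is violated. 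Concretely: use the halving/comparability to find, for the first two projections $e_1\oplus e_2$ (which WLOG are equivalent after shrinking), a copy of $2\times 2$ matrix units $(u_{ij})$; then for suitable $a$ built from these matrix units together with scalars read off the family, compute $a^*a$ and show $\Sp_A(a^*a)$ (which is finite, by Lemma \ref{spegyen}) contains arbitrarily large numbers, contradicting $a^*a\le k\mathbf 1$ for that $a$.

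The main obstacle I anticipate is precisely the bookkeeping in this last step: packaging the infinite orthogonal family into a \emph{single} bounded-location element whose spectral radius is forced to exceed the a priori integer bound attached to it. This requires the finite Baer $^*$-ring machinery (halving, comparability of projections, existence of partial isometries implementing equivalences — all from \cite{Be1}, §§17–20 and §51) to be invoked cleanly, and the role of (EP), (UPSR), symmetry and finiteness is to make those tools available. Once a strictly increasing or an infinite orthogonal projection family is ruled out, the statement follows. I would also remark that Lemma \ref{Kaplansky}'s hypothesis ``every orthogonal system of non-zero projections is finite'' is exactly the conclusion here, which is why this lemma feeds directly into the finite-dimensionality of the non-Abelian summand $M$ in Theorem \ref{main}.
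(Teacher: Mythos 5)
First, note that the paper does not prove this lemma at all: it is stated as a reformulation of Corollary 3 on page 231 of Berberian \cite{Be1}, with the hypotheses imported from the standing assumptions of \S 51 there. So you are attempting something the paper deliberately outsources, and your proof would have to be complete on its own terms. It is not. The decisive step --- producing a \emph{single} element $a\in A$ for which $a^*a\leq k\mathbf{1}$ fails for every $k$ --- is never carried out; you explicitly defer it (``the main obstacle I anticipate is precisely the bookkeeping in this last step''), and the surrounding sketch does not make it plausible. The finite truncations $x_N=\sum_{n=1}^N n\,e_n$ each satisfy a bound with their own $k_N$, as you notice, and the proposed fix of ``transporting the growing projections onto a fixed pair of subprojections'' cannot work as described: to encode infinitely many independent, growing coefficients in the fixed corner $(e_1+e_2)A(e_1+e_2)$ you would again need an infinite sum, which is exactly what a merely pre-$C^*$-normed algebra denies you. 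The only known way to build the unbounded element is an \emph{addability} (orthogonal summability) theorem for regular Baer $^*$-rings, which is precisely where von Neumann regularity, the absence of an Abelian summand, (EP), (UPSR) and the central $i$ are used in Berberian's development; your proposal neither states nor proves such a result.

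Two further concrete problems. Your halving claim --- ``any non-zero projection $e$ dominates two orthogonal equivalent non-zero subprojections'' --- is false in a properly non-Abelian Baer $^*$-ring: a rank-one projection in $M_2(\mathbb{C})$ cannot be halved, even though $M_2(\mathbb{C})$ has no Abelian summand (``properly non-Abelian'' constrains only \emph{central} projections). And there is a sanity check your argument must pass but, as sketched, would not: the commutative algebra $\widehat{B}$ of Example \ref{kommutativ} over an infinite Stonean space satisfies every hypothesis of the lemma except ``no Abelian summand'' (every element there is a finite linear combination of characteristic functions, hence bounded), yet it contains infinite orthogonal families of projections. So any correct proof must invoke non-Abelianness exactly at the summation step; a construction of the unbounded element that does not visibly use it is necessarily wrong. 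As it stands, the proposal is a plan with the key idea missing, not a proof.
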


We are ready to prove our result on algebraic Baer $^*$-algebras that have no Abelian summand.

\begin{theorem}\label{noabelian}
Let $M$ be a $^*$-algebra.
The following are equivalent. 
\begin{itemize}
\item[(i)]$M$ is an algebraic Baer $^*$-algebra without Abelian summand. 
\item[(ii)]$M$ is a finite dimensional Baer $^*$-algebra without Abelian summand.
\end{itemize}
If one (hence all) of the properties holds for $M$, then $M$ is a finite direct sum of full complex matrix algebras of size at least $2\times2$.
\end{theorem}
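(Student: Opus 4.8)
The implication (ii)$\Rightarrow$(i) is the easy direction: a finite dimensional Baer $^*$-algebra is a finite dimensional $C^*$-algebra (Example \ref{finitedim}), hence algebraic, and the ``no Abelian summand'' hypothesis is identical in both statements. So the plan is to prove (i)$\Rightarrow$(ii), and then to extract the matrix-sum description from finite dimensionality together with Example \ref{finitedim} and the absence of an Abelian (equivalently, by Corollary \ref{Baerprops} and the remark that Abelian $\Leftrightarrow$ commutative, central commutative) summand.

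The strategy for (i)$\Rightarrow$(ii) is to verify the hypotheses of Lemma \ref{231} and conclude that every orthogonal system of non-zero projections in $M$ is finite; then Lemma \ref{Kaplansky} immediately gives finite dimensionality, since by Corollary \ref{Baerprops} the algebra $M$ is an algebraic pre-$C^*$-algebra. First I would collect from Corollary \ref{Baerprops} that $M$ is a von Neumann regular, directly finite (hence finite as a $^*$-ring), completely symmetric (hence symmetric) Baer $^*$-algebra satisfying the (EP)- and (UPSR)-axioms, and that it admits a unique pre-$C^*$-norm $\Vert\cdot\Vert$ with $a^*a\le\Vert a\Vert^2\mathbf{1}$ for all $a$; taking $k$ to be any integer $\ge\Vert a\Vert^2$ supplies the boundedness condition in Lemma \ref{231}. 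The hypothesis ``no Abelian summand'' is given. The one remaining ingredient is the central element $i$ with $i^2=-\mathbf{1}$ and $i^*=-i$: here I would simply take $i:=\mathbf{i}\mathbf{1}$, where $\mathbf{i}$ is the complex imaginary unit and $\mathbf{1}$ the unit of $M$ (which exists, every Baer $^*$-ring being unital). This is central, and $(\mathbf{i}\mathbf{1})^2=-\mathbf{1}$, while $(\mathbf{i}\mathbf{1})^*=\overline{\mathbf{i}}\,\mathbf{1}^*=-\mathbf{i}\mathbf{1}$, using that $M$ is a complex $^*$-algebra. With all hypotheses of Lemma \ref{231} in place, every orthogonal family of non-zero projections in $M$ is finite, so Lemma \ref{Kaplansky} yields that $M$ is finite dimensional; being also a Baer $^*$-algebra without Abelian summand, (ii) follows.

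For the final assertion, once $M$ is finite dimensional Example \ref{finitedim} gives $M\cong\oplus_{k=1}^m M_{n_k}(\mathbb{C})$. It remains to observe that no block can be $1$-dimensional: a summand $M_1(\mathbb{C})=\mathbb{C}$ corresponds to a non-zero central projection $e$ (the unit of that block) with $eMe\cong\mathbb{C}$ commutative, hence Abelian, contradicting that $M$ has no Abelian summand. Therefore each $n_k\ge2$, and $M$ is a finite direct sum of full complex matrix algebras of size at least $2\times2$.

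The only genuinely non-routine point is recognizing that everything needed to invoke Lemma \ref{231} has already been assembled in Corollary \ref{Baerprops} and that the ``complex'' structure of a $^*$-algebra furnishes the required central imaginary unit for free; the rest is bookkeeping against the two lemmas. I would expect the write-up to be short, essentially a checklist verifying the hypotheses of Lemma \ref{231}, followed by a two-line appeal to Lemma \ref{Kaplansky} and a two-line argument ruling out $1\times1$ blocks.
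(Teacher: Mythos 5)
Your proposal is correct and follows essentially the same route as the paper: verify the hypotheses of Lemma \ref{231} via Corollary \ref{Baerprops} with $i:=\mathbf{i}\mathbf{1}$, apply Lemma \ref{Kaplansky} to get finite dimensionality, and rule out $1\times1$ blocks in the decomposition of Example \ref{finitedim} using the absence of an Abelian summand. The paper's write-up is merely terser; your checklist of the hypotheses of Lemma \ref{231} is exactly what it leaves implicit.
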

\begin{proof}
(i)$\Rightarrow$(ii):
By Corollary \ref{Baerprops}, $M$ satisfies all of the assertions of the previous lemma (with $i:=\mathbf{i}\mathbf{1}$), hence every system of non-zero orthogonal projections in $M$ is finite. 
Now Lemma \ref{Kaplansky} implies the finite dimensionality of $M$. 

(ii)$\Rightarrow$(i): Finite dimensional algebras are algebraic.

As we mentioned in Example \ref{finitedim}, finite dimensional Baer $^*$-algebras are finite direct sum of full matrix $^*$-algebras over $\mathbb{C}$.
Since $M$ has no Abelian summand, then the size of every matrix algebra in the decomposition \eqref{matrix} of Example \ref{finitedim} must be at least $2\times2$. 
\end{proof}

\subsection{Commutative algebraic Baer $^*$-algebras}\label{kommutat}

Our first result shows that the completion with respect to the unique pre-$C^*$-norm (Corollary \ref{Baerprops}) is also a Baer $^*$-algebra in the commutative case.

\begin{lemma}\label{closure}
Let $B$ be a commutative Baer $^*$-algebra which is algebraic.
If $C^*(B)$ stands for the completion of $B$ with respect to its unique pre-$C^*$-norm, then $C^*(B)$ is a commutative $AW^*$-algebra, and every projection of $C^*(B)$ is in $B$.
\end{lemma}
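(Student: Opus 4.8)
The plan is to show first that $C^*(B)$ is commutative, then that it is an $AW^*$-algebra (equivalently, by Example \ref{kommutativ}, a Rickart $C^*$-algebra with complete projection lattice), and finally that no new projections appear in passing from $B$ to $C^*(B)$. Commutativity is immediate: multiplication is norm-continuous on the completion, and $B$ is dense in $C^*(B)$, so the commutator map $(x,y)\mapsto xy-yx$ vanishes on a dense set, hence everywhere. So $C^*(B)$ is a commutative $C^*$-algebra, and by the Gelfand--Naimark theorem $C^*(B)\cong\mathscr{C}(T;\mathbb{C})$ for a compact Hausdorff space $T$.

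Next I would identify the projections. By Corollary \ref{Baerprops}, $B$ is the linear span of its projections, so the (real-)linear span of $\mathbf{P}(B)$ is dense in $C^*(B)_{sa}$; and since each $b\in B_{sa}$ has a finite spectral decomposition $b=\sum_j\lambda_je_j$ with $e_j\in\mathbf{P}(B)$ (Theorem \ref{expreC}(iv)), the step functions over $\mathbf{P}(B)$ are dense in $C^*(B)_{sa}=\mathscr{C}(T;\mathbb{R})$. In the commutative picture, a projection in $B$ is the characteristic function of a clopen subset of $T$; density of step functions in $\mathscr{C}(T;\mathbb{R})$ then forces $T$ to be totally disconnected with a basis of clopen sets, so the clopen algebra separates points. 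The key claim is that \emph{every} projection of $C^*(B)=\mathscr{C}(T;\mathbb{C})$ already lies in $B$: a projection in $\mathscr{C}(T;\mathbb{C})$ is $\chi_U$ for some clopen $U\subseteq T$, and I must show $\chi_U\in B$. For this I approximate $\chi_U$ in norm by an element $b\in B_{sa}$ with $\|b-\chi_U\|<1/3$; writing $b=\sum_j\lambda_je_j$ and letting $e:=\sum_{\lambda_j>1/2}e_j\in\mathbf{P}(B)$, the inequality $\|b-\chi_U\|<1/3$ forces $e=\chi_U$ pointwise (on $U$ some $\lambda_j>2/3$, off $U$ all $\lambda_j<1/3$), so $\chi_U=e\in B$.

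Finally, to conclude that $C^*(B)$ is an $AW^*$-algebra I invoke Example \ref{kommutativ}: it suffices to show $T$ is Stonean, i.e. $\mathbf{P}(C^*(B))$ is a complete lattice. But $\mathbf{P}(C^*(B))=\mathbf{P}(B)$ by the previous paragraph, and $B$, being a Baer $^*$-ring, has complete projection lattice by Proposition \ref{RickartBaer}; one checks that suprema computed in $B$ remain suprema in $C^*(B)$ (for a commutative $^*$-algebra the lattice operations on projections are given by the ring operations, $e\vee f=e+f-ef$, $e\wedge f=ef$, and the ordering on $\mathbf{P}(B)$ agrees with that on $\mathbf{P}(C^*(B))$ since it is determined by the ring structure). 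Hence $\mathbf{P}(C^*(B))$ is a complete lattice, so $C^*(B)$ is a commutative $AW^*$-algebra, and every one of its projections lies in $B$. The main obstacle is the norm-approximation argument for $\chi_U$: getting from an arbitrary self-adjoint $b\in B$ close to $\chi_U$ to an actual projection in $B$ equal to $\chi_U$, which is where the finite spectral decomposition of Theorem \ref{expreC}(iv) does the essential work.
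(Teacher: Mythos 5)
Your argument is essentially correct but proceeds in the opposite order from the paper's. The paper verifies the Baer annihilator condition for $C^*(B)$ directly from Definition \ref{Baer}: for $S\subseteq C^*(B)$ it applies the Baer property of $B$ to the set $B\cap\mathrm{ann}_{C^*(B)}(S)$, and uses Proposition \ref{preCprops}~$(2)$ (every closed ideal $I$ of $C^*(B)$ satisfies $I=\overline{B\cap I}$) to show that the resulting projection $e\in B$ generates the double annihilator; the statement that every projection of $C^*(B)$ lies in $B$ then falls out by taking $S=\{p\}$ and observing that the annihilator of an idempotent is generated by $\mathbf{1}-p$. You instead prove the projection statement first, by a norm-approximation argument ($\|b-\chi_U\|<1/3$, $b=\sum_j\lambda_je_j$, $e:=\sum_{\lambda_j>1/2}e_j$), which is a correct and rather pleasant alternative --- it exploits Theorem \ref{expreC}(iv) instead of the closed-ideal density lemma --- and then you transfer completeness of the projection lattice from $B$ to $C^*(B)$, which is immediate once the two projection posets are literally identical.

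The one step you should tighten is the final inference ``$\mathbf{P}(C^*(B))$ is a complete lattice, so $T$ is Stonean.'' As stated the ``i.e.'' is false for a general compact Hausdorff $T$: the clopen algebra of $[0,1]$ is (trivially) complete, yet $[0,1]$ is not extremally disconnected; equivalently, Proposition \ref{RickartBaer} requires the Rickart property \emph{in addition to} lattice completeness, and you never verify that $C^*(B)$ is Rickart. Your earlier observation that the clopen sets separate points, so that $T$ is zero-dimensional, is exactly what rescues the argument: a compact Hausdorff zero-dimensional space is the Stone space of its clopen Boolean algebra, and the Stone space of a \emph{complete} Boolean algebra is extremally disconnected (if $W=\bigcup_iU_i$ is open with the $U_i$ clopen and $V=\sup_iU_i$, then any nonempty clopen $U\subseteq V\setminus\overline{W}$ would make $V\setminus U$ a smaller upper bound, so $\overline{W}=V$ is clopen). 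Either spell out this Stone-duality step or cite it; with that addition your proof is complete.
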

\begin{proof}
We prove that $C^*(B)$ is a Baer $^*$-algebra by Definition \ref{Baer}.
Since the algebras in question are commutative, the left and right attributes for the annihilators are redundant, so we have to prove for a fixed non-empty set $S\subseteq C^*(B)$ that 
\[
\mathrm{ann}_{C^*(B)}(S)=\{a\in C^*(B)|Sa=\{0\}\}=fC^*(B)
\]  
for some projection $f\in C^*(B)$. 
We show that such a projection exists in $B$.

For the non-empty set $B\cap\mathrm{ann}_{C^*(B)}(S)$ the Baer property implies that there is projection $e\in B$ such that
\[
eB=\mathrm{ann}_B\left(B\cap\mathrm{ann}_{C^*(B)}(S)\right).
\]
We state that for the projection $f:=\mathbf{1}-e\in B$ the equation
\begin{equation*}
\mathrm{ann}_{C^*(B)}(S)=C^*(B)f
\end{equation*}
is true.
To see this, it is enough to claim the following equality:
\begin{equation}\label{kommBa}
eC^*(B)=\mathrm{ann}_{C^*(B)}\left(\mathrm{ann}_{C^*(B)}(S)\right).
\end{equation}
Indeed, if \eqref{kommBa} holds, then taking annihilators on both sides gives by Proposition 1 (3) in §3 of \cite{Be1} that
\[
C^*(B)(\mathbf{1}-e)=\mathrm{ann}_{C^*(B)}(eC^*(B))=\mathrm{ann}_{C^*(B)}(S).
\]
Now we prove \eqref{kommBa}.
Let $a\in \mathrm{ann}_{C^*(B)}(S)$ be an arbitrary element.
Note first that every annihilator is obviously a closed ideal in the commutative $C^*$-algebra $C^*(B)$.
Thus, $B\cap\mathrm{ann}_{C^*(B)}(S)$ is dense in $\mathrm{ann}_{C^*(B)}(S)$ by Proposition \ref{preCprops} $(2)$, since $B$ is algebraic. 
So there is a sequence $(a_n)_{n\in\mathbb{N}}$ in $B\cap\mathrm{ann}_{C^*(B)}(S)$ such that $a_n\to a$. 
Since $e\in \mathrm{ann}_B\left(B\cap\mathrm{ann}_{C^*(B)}(S)\right)$ we infer that $ea_n=0$ for every $n\in\mathbb{N}$.
This implies $0=ea_n\to ea$, that is, 
\[
e\in \mathrm{ann}_{C^*(B)}\left(\mathrm{ann}_{C^*(B)}(S)\right), 
\]
thus the inclusion 
\[
eC^*(B)\subseteq\mathrm{ann}_{C^*(B)}\left(\mathrm{ann}_{C^*(B)}(S)\right)
\] 
follows.

For the reversed inclusion, let $x\in\mathrm{ann}_{C^*(B)}\left(\mathrm{ann}_{C^*(B)}(S)\right)$ be arbitrary. 
The latter set is a closed ideal in $C^*(B)$, so using Proposition \ref{preCprops} $(2)$ again we get a sequence $(x_n)_{n\in\mathbb{N}}$ in $B\cap \mathrm{ann}_{C^*(B)}\left(\mathrm{ann}_{C^*(B)}(S)\right)$ such that $x_n\to x$.
Thus, $x_n\in B$ and $x_n\mathrm{ann}_{C^*(B)}(S)=\{0\}$ for every $n\in\mathbb{N}$. 
In particular, $x_n(B\cap\mathrm{ann}_{C^*(B)}(S))=\{0\}$. 
But this exactly shows that $x_n\in \mathrm{ann}_B\left(B\cap\mathrm{ann}_{C^*(B)}(S)\right)=eB$ for every $n\in\mathbb{N}$.
Taking the limit of the sequence $(x_n)_{n\in\mathbb{N}}$, we have $x\in\overline{eB}=eC^*(B)$, so \eqref{kommBa} has been proved.

To see that every projection of $C^*(B)$ is in $B$, let $p\in\mathbf{P}(C^*(B))$ be arbitrary.
The preceding proof shows that 
\[
\mathrm{ann}_{C^*(B)}(\{p\})=qC^*(B)
\]
with a projection $q$ in $B$.
But in a unital commutative ring it is well known that the annihilator of an idempotent $p$ is the ideal generated by $\mathbf{1}-p$, hence $q=\mathbf{1}-p$ (\cite{Be1}, Proposition 1 in §1), so $p=\mathbf{1}-q\in B$. 
\end{proof}

Our next theorem characterizes algebraic Baer $^*$-algebras among commutative $^*$-algebras.

\begin{theorem}\label{abelian}
Let $B$ be a commutative $^*$-algebra.
The following statements are equivalent.
\begin{itemize}
\item[(i)]$B$ is an algebraic Baer $^*$-algebra.
\item[(ii)]There exists a Stonean topological space $T$ such that $B$ is $^*$-isomorphic to the linear span of the characteristic functions of the clopen sets in $T$. 
\end{itemize}
\end{theorem}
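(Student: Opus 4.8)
The plan is to prove the two implications separately; the direction (ii)$\Rightarrow$(i) is immediate from Example \ref{kommutativ}, while (i)$\Rightarrow$(ii) is essentially an assembly of results already in hand.

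For (ii)$\Rightarrow$(i): if $T$ is Stonean and $\widehat{B}$ denotes the linear span of the characteristic functions of the clopen subsets of $T$, then Example \ref{kommutativ} already records that $\widehat{B}$ is an algebraic Baer $^*$-algebra: it is a unital $^*$-subalgebra of $\mathscr{C}(T;\mathbb{C})$, commutativity forces it to be algebraic, Theorem \ref{expreC} makes it weakly Rickart, and Proposition \ref{RickartBaer} upgrades the completeness of the clopen projection lattice to the Baer property. All of these properties are preserved by $^*$-isomorphism, so any $B$ satisfying (ii) is an algebraic Baer $^*$-algebra.

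For (i)$\Rightarrow$(ii): let $B$ be a commutative algebraic Baer $^*$-algebra. First I would invoke Corollary \ref{Baerprops} to equip $B$ with its unique pre-$C^*$-norm and form the completion $C^*(B)$. Next, Lemma \ref{closure} gives that $C^*(B)$ is a commutative $AW^*$-algebra in which \emph{every} projection already lies in $B$. By the Berberian/Gelfand--Naimark description in Example \ref{kommutativ}, I can fix a $^*$-isomorphism $\Phi\colon C^*(B)\to\mathscr{C}(T;\mathbb{C})$ with $T$ Stonean, under which the projections of $C^*(B)$ correspond exactly to the characteristic functions of the clopen subsets of $T$; let $\widehat{B}$ be their linear span. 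It then remains to show $\Phi(B)=\widehat{B}$. The inclusion $\Phi(B)\subseteq\widehat{B}$ follows because $B$ is the linear span of its projections (Corollary \ref{Baerprops}), and $\Phi$ maps each projection of $B$, being a projection of $C^*(B)$, to a clopen characteristic function. The reverse inclusion $\widehat{B}\subseteq\Phi(B)$ follows because each clopen characteristic function equals $\Phi(p)$ for some $p\in\mathbf{P}(C^*(B))$, and $p\in B$ by Lemma \ref{closure}. Hence $\Phi$ restricts to a $^*$-isomorphism of $B$ onto $\widehat{B}$, which is precisely (ii).

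I do not expect a genuine obstacle here: the real work has been done in Lemma \ref{closure}, and what remains is the bookkeeping of matching the two ``span of projections'' descriptions through $\Phi$. The one point to state carefully is that Lemma \ref{closure} yields the equality $\mathbf{P}(B)=\mathbf{P}(C^*(B))$ of projection sets (projectionhood being absolute), which is exactly what makes $\spann\mathbf{P}(B)$ and $\spann\mathbf{P}(C^*(B))$ coincide under $\Phi$.
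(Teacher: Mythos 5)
Your proposal is correct and follows essentially the same route as the paper: both directions rest on Example \ref{kommutativ} for (ii)$\Rightarrow$(i), and on Lemma \ref{closure} plus the Gelfand--Naimark/Berberian identification and the fact that $B$ is spanned by its projections (Corollary \ref{Baerprops}) for (i)$\Rightarrow$(ii). Your explicit two-inclusion bookkeeping for $\Phi(B)=\widehat{B}$ is just a spelled-out version of the paper's concluding sentence.
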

\begin{proof}
(i)$\Rightarrow$(ii): Suppose that $B$ is an algebraic Baer $^*$-algebra.
Then from the previous lemma, it follows that $C^*(B)$ is an $AW^*$-algebra. 
Hence, by Theorem 1 in §7 of \cite{Be1} and the Gelfand-Naimark theorem, $C^*(B)$ is $^*$-isomorphic to the $C^*$-algebra $\mathscr{C}(T;\mathbb{C})$ of the complex valued continuous functions on a Stonean topological space $T$. 
If $\widehat{B}$ denotes the image of $B$ via the Gelfand-transform, the lemma also concludes that all of the projections in $\mathscr{C}(T;\mathbb{C})$ are actually contained in $\widehat{B}$.
These projections are the characteristic functions of the clopen sets in $T$ (\cite{Be1}, page 40 and 41).
Since an algebraic Baer $^*$-algebra is the linear span of its projections (Corollary \ref{Baerprops}), we obtain (ii).

(ii)$\Rightarrow$(i):
The proof of this direction was discussed in Example \ref{kommutativ}.
If $\widehat{B}$ denotes the linear span of the characteristic functions of the clopen sets in a Stonean topological space $T$, then by the arguments in the example we get that $\widehat{B}$ is an algebraic Baer $^*$-algebra.
Hence, if $B$ is $^*$-isomorphic to $\widehat{B}$, then $B$ is an algebraic Baer $^*$-algebra as well.
\end{proof}

\subsection{General algebraic Baer $^*$-algebras}\label{general}

Now we are in position to prove our structure theorem for algebraic Baer $^*$-algebras.

\begin{theorem}\label{main}
 The following conditions are equivalent for a $^*$-algebra $A$.
\begin{itemize}
\item[(i)]$A$ is an algebraic Baer $^*$-algebra.
\item[(ii)]$A$ can decomposed as a sum $M\oplus B$, where $M$ is $^*$-isomorphic to a finite direct sum of full complex matrix algebras of size at least $2\times2$, while the summand $B$ is $^*$-isomorphic to the linear span of the characteristic functions of the clopen sets in a Stonean topological space $T$. 
\end{itemize}
\end{theorem}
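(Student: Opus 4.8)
The plan is to derive Theorem \ref{main} as an almost immediate consequence of the decomposition \eqref{AbelnonAbel} together with the two structure results already established in the subsections, namely Theorem \ref{noabelian} and Theorem \ref{abelian}. So the proof will be short: the work has all been done, and what remains is to assemble the pieces and check that the direct-sum decomposition behaves well.

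For the implication (i)$\Rightarrow$(ii): assume $A$ is an algebraic Baer $^*$-algebra. By Theorem \ref{strukszum} there is a unique central projection $h$ with $A=M\oplus B$ where $M=(\mathbf 1-h)A$ has no Abelian summand and $B=hA$ is Abelian; as noted before \eqref{AbelnonAbel}, both summands are themselves Baer $^*$-subalgebras (they are of the form $eAe$ for a central projection $e$), and both are algebraic since subalgebras of algebraic algebras are algebraic. Now $M$ is an algebraic Baer $^*$-algebra without Abelian summand, so Theorem \ref{noabelian} gives that $M$ is $^*$-isomorphic to a finite direct sum of full complex matrix algebras of size at least $2\times 2$. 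The summand $B$ is an algebraic Baer $^*$-algebra which, being Abelian, is commutative by the remark recalled after Theorem \ref{strukszum} (an algebraic Baer $^*$-algebra is the linear span of its projections by Corollary \ref{Baerprops}, so Abelian forces commutative). Hence Theorem \ref{abelian} applies and yields a Stonean space $T$ with $B$ $^*$-isomorphic to the linear span of the characteristic functions of the clopen sets of $T$. This is exactly (ii).

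For the converse (ii)$\Rightarrow$(i): suppose $A=M\oplus B$ as described. Each summand is an algebraic Baer $^*$-algebra: for $M$ this is the easy direction of Theorem \ref{noabelian} (finite dimensional, hence algebraic, and a finite direct sum of full matrix $^*$-algebras is a Baer $^*$-algebra, as recalled in Example \ref{finitedim}); for $B$ it is the direction (ii)$\Rightarrow$(i) of Theorem \ref{abelian}, discussed in Example \ref{kommutativ}. It then remains to observe that a finite direct sum of (algebraic) Baer $^*$-algebras is again an (algebraic) Baer $^*$-algebra. Algebraicity of a finite direct sum is clear, since a polynomial killing a pair of coordinates can be obtained by multiplying the two individual annihilating polynomials. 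That the direct sum of Baer $^*$-rings is Baer is a routine verification: for a subset $S\subseteq M\oplus B$ one has $\mathrm{ann}_r(S)=\mathrm{ann}_r(\pi_M(S))\oplus\mathrm{ann}_r(\pi_B(S))$ where $\pi_M,\pi_B$ are the coordinate projections, and if these component annihilators are generated by projections $p\in M$, $q\in B$ respectively, then $\mathrm{ann}_r(S)=(p\oplus q)(M\oplus B)$; the involution on the direct sum is the coordinatewise one. Hence $A$ is an algebraic Baer $^*$-algebra.

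I do not expect a genuine obstacle here, since all the substantive content — the finite-dimensionality argument via Lemma \ref{Kaplansky} and Lemma \ref{231}, and the Gelfand-theoretic identification of the commutative part via Lemma \ref{closure} — has been carried out in Theorems \ref{noabelian} and \ref{abelian}. The only point requiring a small amount of care is the closure of the class of algebraic Baer $^*$-algebras under finite direct sums (used in the converse direction and implicitly in reading \eqref{AbelnonAbel} the right way); this is elementary but should be stated explicitly rather than left to the reader, perhaps as a one-line observation or a reference back to the general remarks on $eAe$ in subsection \ref{Baerintro}. Everything else is bookkeeping about the central projection $h$ of Theorem \ref{strukszum}.
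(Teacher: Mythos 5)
Your proposal is correct and follows exactly the paper's own route: decompose $A$ via Theorem \ref{strukszum} as in \eqref{AbelnonAbel}, apply Theorems \ref{noabelian} and \ref{abelian} to the two summands, and for the converse combine the reverse directions of those theorems with the (elementary, and in the paper left implicit) closure of algebraic Baer $^*$-algebras under finite direct sums. Your extra care in spelling out the direct-sum verification is a harmless refinement, not a different argument.
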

\begin{proof}
(i)$\Rightarrow$(ii): 
According to \eqref{AbelnonAbel}, we can decompose $A$ into a sum 
\[
A=M\oplus B,
\]
where $M$ has no Abelian summand and $B$ is commutative.
Now from Theorems \ref{noabelian} and \ref{abelian}, the properties in (ii) immediately follow.

(ii)$\Rightarrow$(i): If $A=M\oplus B$ as in (ii), then $M$ and $B$ are algebraic Baer $^*$-algebras by Theorems \ref{noabelian} and \ref{abelian}. It is clear that their direct sum is an algebraic Baer $^*$-algebra as well.
\end{proof}

\begin{corollary}\label{AWcor}
The $C^*$-algebras which contain a dense algebraic Baer $^*$-algebra are exactly the $AW^*$-algebras with finite codimensional Abelian summand.  
\end{corollary}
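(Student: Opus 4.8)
The plan is to deduce this from Theorem \ref{main} together with Lemma \ref{closure} and the computation of completions. First I would address the forward direction: suppose $\mathscr{A}$ is a $C^*$-algebra containing a dense algebraic Baer $^*$-subalgebra $A$. By Theorem \ref{main}, $A=M\oplus B$ with $M$ finite dimensional (a finite direct sum of full matrix algebras of size at least $2\times 2$) and $B$ commutative algebraic Baer. Since $A$ is dense in $\mathscr{A}$ and $A$ carries a unique pre-$C^*$-norm (Corollary \ref{Baerprops}), the norm of $\mathscr{A}$ restricts to that pre-$C^*$-norm, so $\mathscr{A}=C^*(A)$. The completion respects the direct sum decomposition: $C^*(A)=C^*(M)\oplus C^*(B)=M\oplus C^*(B)$, because $M$ is already complete (finite dimensional). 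By Lemma \ref{closure}, $C^*(B)$ is a commutative $AW^*$-algebra. Hence $\mathscr{A}$ is the direct sum of a finite dimensional $C^*$-algebra $M$ (whose summands are all non-commutative, being matrix algebras of size $\geq 2$) and a commutative $AW^*$-algebra $C^*(B)$. The central decomposition of $\mathscr{A}$ into Abelian and properly non-Abelian parts (Theorem \ref{strukszum} applied to $C^*(A)$, or directly) then shows the Abelian summand of $\mathscr{A}$ is precisely $C^*(B)$, and its complementary summand $M$ is finite dimensional, hence the Abelian summand has finite codimension. So $\mathscr{A}$ is an $AW^*$-algebra with finite codimensional Abelian summand.

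For the converse, suppose $\mathscr{A}$ is an $AW^*$-algebra whose Abelian summand $\mathscr{B}$ has finite codimension, so $\mathscr{A}=\mathscr{M}\oplus\mathscr{B}$ with $\mathscr{M}=(\mathbf{1}-h)\mathscr{A}$ finite dimensional and $\mathscr{B}=h\mathscr{A}$ a commutative $AW^*$-algebra. By Example \ref{kommutativ}, $\mathscr{B}\cong\mathscr{C}(T;\mathbb{C})$ for a Stonean space $T$, and the linear span $\widehat{B}$ of the characteristic functions of the clopen sets of $T$ is a dense algebraic Baer $^*$-subalgebra of $\mathscr{B}$. On the other hand $\mathscr{M}$, being finite dimensional, is its own dense algebraic Baer $^*$-subalgebra (finite dimensional $C^*$-algebras are algebraic Baer $^*$-algebras, Example \ref{finitedim}). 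Then $A:=\mathscr{M}\oplus\widehat{B}$ is dense in $\mathscr{A}$, and by Theorem \ref{main} (ii)$\Rightarrow$(i) it is an algebraic Baer $^*$-algebra. This gives the desired dense subalgebra.

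The main obstacle is the verification that completion commutes with the direct sum decomposition and, more subtly, that the central Abelian/non-Abelian splitting of $A$ as in \eqref{AbelnonAbel} is inherited by $\mathscr{A}=C^*(A)$ — i.e., that the central projection $h$ witnessing the decomposition of $A$ remains the witnessing central projection in the $AW^*$-algebra $\mathscr{A}$, so that "$B$ commutative of codimension $\dim M<\infty$ in $A$" really translates to "$\mathscr{B}$ the Abelian summand of codimension $\dim\mathscr{M}<\infty$ in $\mathscr{A}$". This should follow because $M$ (being finite dimensional with all blocks of size $\geq 2$) is properly non-Abelian and stays so in the completion, while $C^*(B)$ stays commutative; a projection central in $A$ is central in the completion by density and continuity of multiplication. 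Once that is pinned down, everything else is a routine bookkeeping of the two-sided passage between $A$ and $C^*(A)$ using Corollary \ref{Baerprops} and Lemma \ref{closure}.
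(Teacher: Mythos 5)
Your proposal is correct and follows essentially the same route as the paper's own proof: identify $\mathscr{A}$ with $C^*(A)$ via uniqueness of the pre-$C^*$-norm, decompose $A=M\oplus B$ by Theorem \ref{main} so that $C^*(A)=M\oplus C^*(B)$ with $C^*(B)$ an Abelian $AW^*$-algebra by Lemma \ref{closure}, and reverse the construction via Theorem \ref{strukszum}, Example \ref{kommutativ} and Theorem \ref{abelian}. Your added remark verifying that the Abelian/properly non-Abelian splitting of $A$ is inherited by the completion is a point the paper passes over more quickly, but it is the same argument, just spelled out.
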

\begin{proof}
Suppose that $\mathscr{A}$ is a $C^*$-algebra containing a dense algebraic Baer $^*$-algebra $A$.
By Corollary \ref{Baerprops}, there is a unique pre-$C^*$-norm on $A$. 
The density and the uniqueness imply that the completion $C^*(A)$ of $A$ with respect to this norm is actually $\mathscr{A}$. 
According to Theorem \ref{main}, we may write $A=M\oplus B$, where $M$ is a finite direct sum of full complex matrix algebras of size at least $2\times2$, while the summand $B$ is $^*$-isomorphic to the linear span of the characteristic functions of the clopen sets in a Stonean topological space $T$. 
Hence this form and the finite dimensionality of $M$ clearly imply  that
\[
C^*(A)=M\oplus C^*(B).
\]
We infer that $C^*(B)$ is an Abelian $AW^*$-algebra from Lemma \ref{closure}.
Since $M$ is a finite dimensional $AW^*$-algebra without commutative ideals, it follows that $C^*(A)$ is an $AW^*$-algebra with finite codimensional Abelian summand.   

Now let $\mathscr{A}$ be an $AW^*$-algebra with finite codimensional Abelian summand. By Theorem \ref{strukszum}, $\mathscr{A}$ can be decomposed as a sum
\[
\mathscr{A}=M\oplus\mathscr{B},
\]
where $M$ is a finite dimensional $AW^*$-algebra without Abelian summand and $\mathscr{B}$ is an Abelian (commutative) $AW^*$-algebra.
By Example \ref{kommutativ} and Theorem \ref{abelian}, $\mathscr{B}$ contains a dense algebraic Baer $^*$-algebra $B$, hence the algebraic Baer $^*$-algebra $A:=M\oplus B$ is dense in $\mathscr{A}$.   
\end{proof}

\section{An application to complex group algebras}\label{group}

Let $G$ be a group and let $\mathbb{F}$ be a field.
The group algebra $\mathbb{F}[G]$ consists of all formal finite sums of the form $\sum_{g\in G}\lambda_gg$, where $\lambda_g\in\mathbb{F}$. 
For $a,b\in\mathbb{F}[G]$, $a=\sum_{g\in G}\lambda_gg$, $b=\sum_{h\in G}\mu_hh$ and $\lambda\in\mathbb{F}$, the operations are defined by
\[
a+b:=\sum_{g\in G}(\lambda_g+\mu_g)g;\ ab:=\sum_{g,h\in G}(\lambda_g\mu_h)gh;\ \lambda a:=\sum_{g\in G}(\lambda\lambda_g)g.
\] 
If $\mathbb{F}=\mathbb{C}$, then an involution can be defined by
\[
a^*=\left(\sum_{g\in G}\lambda_gg\right)^*:=\sum_{g\in G}\overline{\lambda_g}g^{-1}.
\]
It is easy to see that $\mathbb{C}[G]$ is a $^*$-algebra with proper involution, moreover it admits a pre-$C^*$-norm.
(For the theory of group rings and locally compact groups we refer the reader to \cite{Passman2}, \cite{Palmer1} and \cite{Palmer2}.)

Our result specifies which groups have the property that their group algebras over $\mathbb{C}$ are algebraic Baer $^*$-algebras.

\begin{theorem}\label{cg}
If $G$ is a group, then the following statements are equivalent.
\begin{itemize}
\item[(i)]The complex group algebra $\mathbb{C}[G]$ is an algebraic Baer $^*$-algebra. 
\item[(ii)]$G$ is finite.
\end{itemize}
\end{theorem}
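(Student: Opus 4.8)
The plan is to prove the two implications separately; the direction (ii)$\Rightarrow$(i) is immediate, while (i)$\Rightarrow$(ii) carries all the content and will combine the structure theorem with a point about compact groups. For (ii)$\Rightarrow$(i): if $G$ is finite then $\mathbb{C}[G]$ is finite dimensional, hence algebraic, and its involution is proper, so by Example \ref{finitedim} it is a finite dimensional $C^*$-algebra, $\mathbb{C}[G]\cong\oplus_k M_{n_k}(\mathbb{C})$, which is a Baer $^*$-algebra.

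For (i)$\Rightarrow$(ii) I would argue by contradiction: suppose $\mathbb{C}[G]$ is an algebraic Baer $^*$-algebra while $G$ is infinite, so $\dim_{\mathbb{C}}\mathbb{C}[G]=|G|=\infty$. Algebraicity first forces $G$ to be a torsion group (a group element of infinite order would span an infinite dimensional, hence non-algebraic, subalgebra), so the abelianization $G^{\mathrm{ab}}:=G/[G,G]$ is an abelian torsion group. By Theorem \ref{main}, $\mathbb{C}[G]=M\oplus B$ with $M$ finite dimensional and $B$ a commutative algebraic Baer $^*$-algebra; since $\dim M<\infty$, the summand $B$ is infinite dimensional. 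Write $B=h\,\mathbb{C}[G]$ with $h$ a central projection (Theorem \ref{strukszum}); because $B$ is commutative, $g\mapsto hg$ is a homomorphism of $G$ into the abelian unitary group of the unital $^*$-algebra $B$, hence it annihilates $[G,G]$, so the quotient map $a\mapsto ha$ of $\mathbb{C}[G]$ onto $B$ factors through $\mathbb{C}[G^{\mathrm{ab}}]$; a short diagram chase then identifies $B$ with a corner $\overline{h}\,\mathbb{C}[G^{\mathrm{ab}}]$ for a central projection $\overline{h}$ of $\mathbb{C}[G^{\mathrm{ab}}]$, and in particular $\mathbb{C}[G^{\mathrm{ab}}]$ is infinite dimensional, so $G^{\mathrm{ab}}$ is infinite.

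Now I would pass to completions. By uniqueness of the pre-$C^*$-norm (Corollary \ref{Baerprops}), the completion of $B$ is $C^*(B)=\overline{h}\,C^*(G^{\mathrm{ab}})$, and since $G^{\mathrm{ab}}$ is abelian and torsion, $C^*(G^{\mathrm{ab}})=C(\widehat{G^{\mathrm{ab}}})$ with $\widehat{G^{\mathrm{ab}}}$ an infinite profinite group. Hence $C^*(B)=C(W)$, where $W=\{\chi\in\widehat{G^{\mathrm{ab}}}:\overline{h}(\chi)=1\}$ is a clopen subset of $\widehat{G^{\mathrm{ab}}}$, infinite because $C(W)$ is infinite dimensional; and by Theorem \ref{abelian} (equivalently Example \ref{kommutativ}), $W$ is a Stonean space, i.e.\ extremally disconnected. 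Translating so that the identity character lies in $W$, the clopen neighbourhood $W$ of $1$ in the profinite group $\widehat{G^{\mathrm{ab}}}$ contains an open subgroup $V$; being clopen in $W$, $V$ is extremally disconnected, and having finite index it exhibits $\widehat{G^{\mathrm{ab}}}$ as a finite disjoint union of clopen homeomorphic copies of $V$, so $\widehat{G^{\mathrm{ab}}}$ is itself extremally disconnected. But an infinite profinite group is never extremally disconnected — for instance it admits an infinite metrizable profinite quotient $K$, and the pullback embedding $\mathrm{Clop}(K)\hookrightarrow\mathrm{Clop}(\widehat{G^{\mathrm{ab}}})$ of Boolean algebras of clopen sets preserves suprema (a quotient map of compact groups is open and continuous), so if $\mathrm{Clop}(\widehat{G^{\mathrm{ab}}})$ were complete then the countably infinite atomless algebra $\mathrm{Clop}(K)$ would be complete, which is absurd. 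This contradiction forces $G$ to be finite.

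The main obstacle I anticipate is precisely this last step: one has to recognise $C^*(B)$ as the algebra of continuous functions on a clopen slice of a compact group and then invoke the essentially classical — but genuinely non-$^*$-algebraic — fact that infinite compact (here profinite) groups fail to be extremally disconnected. A secondary technical point is the verification that the commutative summand $B$ really is a corner $\overline{h}\,\mathbb{C}[G^{\mathrm{ab}}]$ of the abelian group algebra, and not merely a quotient of it; this is what makes $W$ clopen (and not just closed) in $\widehat{G^{\mathrm{ab}}}$, which in turn is what permits replacing $\widehat{G^{\mathrm{ab}}}$ by $W$ when arguing about extremal disconnectedness.
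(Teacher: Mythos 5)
Your proof is correct, but it follows a genuinely different route from the paper's. The paper also starts from the decomposition $\mathbb{C}[G]=M\oplus B$ of Theorem \ref{main}, but then splits into two cases: if $M\neq\{0\}$ it observes that $B$ is a finite-codimensional annihilator ideal of $\mathbb{C}[G]$ and invokes Passman's theorem on minimal ideals in group rings to conclude $G$ is finite outright; if $M=\{0\}$ then $G$ is abelian, and the Fourier--Gelfand transform together with Corollary \ref{AWcor} shows $\mathscr{C}(\widehat{G};\mathbb{C})$ is an $AW^*$-algebra, whence $\widehat{G}$ is Stonean and Rajagopalan's theorem (an extremally disconnected locally compact group is discrete) finishes the argument. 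You avoid both external inputs: instead of Passman you show that the commutative summand $B=h\mathbb{C}[G]$ is $^*$-isomorphic to the corner $\overline{h}\,\mathbb{C}[G^{\mathrm{ab}}]$ (the verification sketched in your last paragraph does go through: $\ker(\mathbb{C}[G]\to\mathbb{C}[G^{\mathrm{ab}}])\subseteq\ker(a\mapsto ha)$ because the $hg$ are commuting unitaries of $B$, and $\overline{h}\pi(a)=\pi(ha)$ vanishes whenever $ha=0$, giving injectivity on the corner), and instead of Rajagopalan you use that algebraicity forces $G$, hence $G^{\mathrm{ab}}$, to be torsion, so $\widehat{G^{\mathrm{ab}}}$ is profinite, and you rule out extremal disconnectedness of an infinite profinite group by the Boolean-algebra argument with a metrizable quotient. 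Your approach treats the two cases uniformly and is more self-contained; the paper's is shorter given the two cited theorems. Two small points you should make explicit in a write-up: (a) the identification $C^*(\mathbb{C}[G^{\mathrm{ab}}])=\mathscr{C}(\widehat{G^{\mathrm{ab}}};\mathbb{C})$ rests on the injectivity of the Fourier--Gelfand transform on $\mathbb{C}[G^{\mathrm{ab}}]$ plus the uniqueness of the pre-$C^*$-norm from Proposition \ref{preCprops}; and (b) the passage from ``$\mathrm{Clop}(\widehat{G^{\mathrm{ab}}})$ complete'' to ``$\mathrm{Clop}(K)$ complete'' needs the identity $\overline{q^{-1}(S)}=q^{-1}(\overline{S})$ for the open continuous surjection $q$, which is exactly where openness of quotient maps of compact groups enters.
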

\begin{proof}
(i)$\Rightarrow$(ii): The elements of $G$ constitute an algebraic basis for $\mathbb{C}[G]$, hence we need to prove that $\mathbb{C}[G]$ is finite dimensional.
From Theorem \ref{main}, we obtain that $\mathbb{C}[G]$ splits into a direct sum $M\oplus B$, where $M$ is a finite dimensional Baer $^*$-algebra, $B$ is a commutative algebraic Baer $^*$-algebra which is $^*$-isomorphic to the linear span of the characteristic functions of the clopen sets in a Stonean topological space $T$. 
We examine two cases: 
\begin{itemize}
\item[(I)] $M=\{0\}$; 
\item[(II)] $M\neq\{0\}$. 
\end{itemize}
Case (I): if $M=\{0\}$, then $\mathbb{C}[G]=B$, i. e., $\mathbb{C}[G]$ is commutative. 
It is well known that this occurs exactly when $G$ is commutative.
Regarding $G$ as a discrete group, its dual group $\widehat{G}$ is a compact group (see \cite{Dav}, Chapter VII.1).
The Fourier-Gelfand transform (which is a $^*$-homomorphism) maps $\mathbb{C}[G]$ injectively onto a dense $^*$-subalgebra of $\mathscr{C}(\widehat{G};\mathbb{C})$.
Thus, the latter $C^*$-algebra contains a dense algebraic Baer $^*$-algebra.
Corollary \ref{AWcor} implies that $\mathscr{C}(\widehat{G};\mathbb{C})$ is an $AW^*$-algebra.
From this, it follows that $\widehat{G}$ is a Stonean topological space (Example \ref{kommutativ}), that is, compact and extremally disconnected. 
Now, from Theorem 1 in \cite{Rag}, we obtain that $\widehat{G}$ is discrete.
As $\widehat{G}$ is compact and discrete, $\widehat{G}$ is finite.

Case (II): if $M\neq\{0\}$, then $B$ is an annihilator ideal of $\mathbb{C}[G]$ (since $B$ is the left/right annihilator of $M$), which is finite codimensional.
By Passman's Theorem 3.1 in \cite{Passman}, $G$ is finite.

(ii)$\Rightarrow$(i): If $G$ is finite, then $\mathbb{C}[G]$ is a finite dimensional $^*$-algebra with a $C^*$-norm, thus it is an algebraic Baer $^*$-algebra.
\end{proof}

An analogous statement can be proved in the case of locally compact groups.

\begin{corollary}
Let $G$ be a locally compact group and let $\beta$ be a left Haar-measure of $G$.  
If $\mathscr{K}_{\beta}(G;\mathbb{C})$ stands for the convolution $^*$-algebra of compactly supported complex valued functions on $G$, then the following statements are equivalent. 
\begin{itemize}
\item[(i)] $\mathscr{K}_{\beta}(G;\mathbb{C})$ is an algebraic Baer $^*$-algebra.
\item[(ii)] $G$ is finite.
\end{itemize}
\end{corollary}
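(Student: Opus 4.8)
The plan is to reduce the statement to Theorem \ref{cg} (the discrete group algebra case) by first showing that $G$ must be discrete; once that is done everything is formal.

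First I would dispose of the easy implication (ii)$\Rightarrow$(i): when $G$ is finite, a left Haar measure $\beta$ is a positive multiple of the counting measure, so $\mathscr{K}_{\beta}(G;\mathbb{C})$ is the finite dimensional algebra of all complex functions on $G$, and (after rescaling $\beta$) it is $^*$-isomorphic to the group algebra $\mathbb{C}[G]$. Since $\mathbb{C}[G]$ is a finite dimensional $C^*$-algebra, Example \ref{finitedim} shows it is an algebraic Baer $^*$-algebra, hence so is $\mathscr{K}_{\beta}(G;\mathbb{C})$.

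For (i)$\Rightarrow$(ii), suppose $\mathscr{K}_{\beta}(G;\mathbb{C})$ is an algebraic Baer $^*$-algebra. The key observation is that every Baer $^*$-ring is unital (subsection \ref{Baerintro}), so the convolution algebra $\mathscr{K}_{\beta}(G;\mathbb{C})$ possesses a two-sided identity $u$. I would then argue that $u$ is in fact a two-sided identity for the full group algebra $L^{1}(G)$: indeed $\mathscr{K}_{\beta}(G;\mathbb{C})$ is a dense $^*$-subalgebra of $L^{1}(G)$, and convolution by the fixed element $u$ is a bounded operator on $L^{1}(G)$, so the identity $u*f=f=f*u$ on the dense subalgebra propagates to all of $L^{1}(G)$. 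By the classical characterization of unital group algebras (see e.g. \cite{Palmer2}), this forces $G$ to be discrete. But for a discrete $G$ a Haar measure is a multiple of the counting measure and $\mathscr{K}_{\beta}(G;\mathbb{C})$ is just the $^*$-algebra of finitely supported complex functions on $G$, i.e., $^*$-isomorphic to $\mathbb{C}[G]$. Hence $\mathbb{C}[G]$ is an algebraic Baer $^*$-algebra, and Theorem \ref{cg} yields that $G$ is finite.

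The main obstacle I anticipate is precisely the discreteness step: one has to invoke (or reprove) the standard fact that the convolution algebra of compactly supported functions on a locally compact group has an identity element exactly when the group is discrete --- equivalently, that an approximate identity supported in shrinking neighbourhoods of $1$ cannot be replaced by an honest identity unless $\{1\}$ is open. Everything else is a routine reduction to the group algebra result already established in Theorem \ref{cg}.
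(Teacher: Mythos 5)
Your proposal is correct and takes essentially the same route as the paper: the paper likewise deduces discreteness of $G$ from the presence of a unit in the Baer $^*$-algebra $\mathscr{K}_{\beta}(G;\mathbb{C})$ (offering Theorem 6.5 of \cite{SzT} as an alternative), identifies the algebra with $\mathbb{C}[G]$, and invokes Theorem \ref{cg}. Your extra detail on why the unit propagates to $L^{1}(G)$ and forces discreteness correctly fills in the step the paper leaves as a standard fact.
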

\begin{proof}
(i)$\Rightarrow$(ii): The conditions in (i) imply that $G$ is discrete. 
Indeed, Theorem 6.5 in \cite{SzT} 
or the presence of the unit element in the Baer $^*$-algebra $\mathscr{K}_{\beta}(G;\mathbb{C})$ ensures that $G$ is discrete. 
Now the group algebra $\mathbb{C}[G]$ can be identified with $\mathscr{K}_{\beta}(G;\mathbb{C})$, hence Theorem \ref{cg} applies. 

(ii)$\Rightarrow$(i): Obvious.
\end{proof}

\begin{remark}Let $G$ be a group with unit $\mathbf{1}_G$.
\begin{enumerate}
\item The conditions "algebraic" and "being a Baer $^*$-algebra" are independent for group algebras over $\mathbb{C}$.
For example, if $G$ is a finitely generated torsion-free commutative group, then for an arbitrary field $\mathbb{F}$ the group algebra $\mathbb{F}[G]$ has no zero-divisors (\cite{Passman2}, Lemma 1.1 in §1).
Thus, in the case of $\mathbb{F}=\mathbb{C}$, the group algebra $\mathbb{C}[G]$ is obviously a Baer $^*$-algebra. 
Moreover, for a $g\in G\setminus\{\mathbf{1}_G\}\subseteq\mathbb{C}[G]$, we have that $g$ has infinite order. So $\{g^n|n\in\mathbb{Z}\}$ is a linearly independent system in the group algebra, hence $g$ is not an algebraic element.

In fact, a theorem of Herstein (\cite{Passman2}, Theorem 3.11 in §3) states that if $\mathbb{F}$ is a field with zero characteristic, then $\mathbb{F}[G]$ is algebraic if and only if $G$ is locally finite.
Thus, for a non-finite locally finite group $G$, our Theorem \ref{cg} implies that $\mathbb{C}[G]$ is an algebraic pre-$C^*$-algebra which is not a Baer $^*$-algebra.  

\item In the absence of an involution, we say that a ring $A$ is \emph{Baer}, if the right annihilator $\mathrm{ann}_r(S)$ of every non-empty subset $S\subseteq A$ is a principal right ideal generated by an idempotent (\cite{Be2}).
By definition, a Baer $^*$-ring is a Baer ring, but the converse is not true in general (9.1.42 in \cite{Palmer2}).

However, if a $^*$-ring is a Baer ring which is
von Neumann regular with proper involution (\cite{Be2}, Proposition 1.13) or symmetric (\cite{Be1},  Ex. 5A in page 25), then it is a Baer $^*$-ring.
Since complex group algebras $\mathbb{C}[G]$ have proper involution, the properties "Baer ring" and "Baer $^*$-ring" are equivalent for the algebraic complex group algebras (Theorems \ref{Studia} and \ref{expreC} imply that such algebras are symmetric and von Neumann regular). 
So, by the above mentioned theorem of Herstein and Theorem \ref{cg},
a locally finite group $G$ is finite if and only if $\mathbb{C}[G]$ is a Baer ring.
\end{enumerate}
\end{remark}

\section*{Acknowledgement}

The authors thank the anonymous referee for the insightful comments that improved the presentation the paper.

\bibliographystyle{amsplain}

\end{document}